\numberwithin{figure}{section}
\numberwithin{figure}{section}
\newtheorem{theorem}{Theorem}[section]
\newtheorem{lemma}[theorem]{Lemma}
\newtheorem{proposition}[theorem]{Proposition}
\theoremstyle{definition}
\newtheorem{definition}[theorem]{Definition}
\newtheorem{remark}[theorem]{Remark}
\numberwithin{equation}{section}
\newcommand{\R}{\mathbb{R}}
\newcommand{\Ha}{\mathcal{H}}
\newcommand{\beq}{\begin{equation}}
\newcommand{\eeq}{\end{equation}}
\newcommand{\eps}{\varepsilon}
\newcommand{\pa}{\partial}
\newcommand{\medint}{-\kern -,375cm\int}
\newcommand{\medintinrigo}{-\kern -,335cm\int}
\begin{document}

\title[Stationary sets]{Stationary sets and asymptotic behavior  of the  mean curvature flow with forcing in the plane}

\author{Nicola Fusco, Vesa Julin, Massimiliano Morini}
\address{Dipartimento di Matematica e Applicazioni,
Universit\`{a}  di Napoli "Federico II", Napoli, Italy}
\email{n.fusco@unina.it}
\address{Jyv\"askyl\"an Yliopisto, Matematiikan ja Tilastotieteen Laitos, Jyv\"askyl\"a, Finland.}
\email{vesa.julin@jyu.fi}
\address{Universit\`a di Parma, Dipartimento di Scienze Matematiche, Fisiche e Informatiche, Parma, Italy}
\email{massimiliano.morini@unipr.it}

\keywords{}

\begin{abstract} 
We consider the flat flow solutions of the mean curvature equation with a forcing term in the plane. We prove that  for every constant forcing term the stationary sets are given by a finite union of disks with equal radii and disjoint closures. On the other hand for every bounded forcing term tangent disks are never stationary. Finally in the case of an asymptotically constant forcing term we show that the only possible long time limit sets are given by disjoint unions of disks with equal radii and possibly tangent.
\end{abstract}
\keywords{Forced mean curvature flow, large time behavior, stationary sets, critical sets}

\maketitle

\section{Introduction}
Mean curvature flow is one of the simplest and yet most interesting geometric evolution equation. In order to deal with formation of singularities or rough initial data several notions of generalized solutions have been proposed. Among them we mention Brakke's solutions in the varifold sense \cite{Bra}, level-set solutions in the viscosity sense \cite{CGG}, \cite{ES}, De Giorgi's minimal barriers \cite{DG} and the flat flows solutions constructed by the minimizing movements method \cite{ATW}, \cite{LS}. Each method has its own advantages and drawbacks. For instance Brakke's theory fails to provide unique solutions, but yields a satisfactory partial regularity theory, see also \cite{KT}. On the contrary, the viscosity level-set method provides uniqueness and   global existence, but it is not so convenient as far as regularity is concerned. Indeed in this framework one may construct singular solutions where the evolving hypersurfaces become sets with nonempty interior, the so called fattening phenomenon. This phenomenon can occur even if the initial set is regular after a positive time, see \cite{BP}. De Giorgi's minimal barriers provide essentially the same solutions as the level-set method, see \cite{BN}; within this approach the fattening phenomenon is related to the fact that  minimal and  maximal solutions may be different, see \cite{BP}. Flat flow solutions are also defined globally in time. They are always given by evolving boundaries of sets and may not be unique whenever the level-set solution experiences the fattening phenomenon. However, level-set solutions, De Giorgi's minimal barriers and  flat flows all coincide with the classical solutions as long as the latter exist.

In this paper we focus on the flat flow approach for the mean curvature equation with a time dependent forcing term in the plane, i.e.,
 \beq
\label{flow}
V_t = -k_{E_t} + f(t) \qquad \text{on } \, \pa E_t
\eeq
with an arbitrary initial datum under the assumption that the forcing term $f$ is uniformly bounded, i.e., 
\beq 
\label{f1}
\sup_{t \geq 0} |f(t)| \leq C_0.
\eeq
Here $k_{E_t}$ stands for the curvature of the boundary of $E_t$ with respect to the orientation given by the outward normal.
For the precise definition of flat flow see the beginning of Section~\ref{due}.

 The existence of flat flow solutions for the equation \eqref{flow} in any dimension and their relations with the De Giorgi' barriers and the level-set solutions has been investigated in \cite{CN}. In this paper we further elaborate on the properties of flat flows solutions in two dimensions focusing on the following issues: how the flat flow selects a solution when the fattening phenomenon occurs, the characterization of sets that are stationary  when $f$ is constant and the long time behavior of solutions.

\subsection{Flat flow as a selection principle} Here we consider a particular situation where the initial set is given by two tangent disks of equal radii $D_r(x_1)$ and $D_r(x_2)$. It is well known that in this example the level-set solution develops instantaneously a nonempty interior. When $f(t)\equiv1/r$ the minimal barrier solution of \eqref{flow} is stationary, while the maximal barrier solution becomes a connected set containing a ball centered at the origin with a time dependent radius, see \cite{BP}. It is an interesting problem to look for a selection principle among the possible admissible behaviors. One such principle can be obtained by adding to the forcing term a small stochastic perturbation. This has been investigated in \cite{DLN} where the perturbation considered is of the form $\eps\,dW$, with $W$  a standard Brownian motion. The authors show that when $\eps$ goes to zero the corresponding motion converges with probability $1/2$ to the maximal barrier solution and with probability $1/2$ to the minimal one. In this paper we prove that any flat flow instantaneously connects the two tangent disks with a thin neck and keeps enlarging the neck at least for a short time interval, thus showing that the flat flow somehow picks the behavior of the maximal barrier solution. The precise statement is as follows.

\begin{theorem}
\label{thm1}
Let $E_0 \subset \R^2$ be a union of two tangent disks $E_0 = D_r(x_1) \cup D_r(x_2)$  and let $(E_t)_t$ be  a  flat flow of \eqref{flow} starting from $E_0$ and assume that \eqref{f1} holds. There exist $\delta>0$, $\eta>0$ and $c>0$ such that for every $t \in (0,\delta)$ the set   $E_t$ contains a  dumbbell shaped 
simply connected set which in turn contains the disks $D_{\eta r}(x_1)$ and $D_{\eta r}(x_2)$ and a ball centered at the origin of radius $t$.  In particular for every $t \in (0,\delta)$ 
\[
 |E_t \setminus E_0| \geq c\,t^3. 
\]
\end{theorem}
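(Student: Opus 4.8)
The natural approach is to argue at the level of the minimizing-movement (Almgren--Taylor--Wang) scheme that produces the flat flow. Fix $h>0$, set $E^h_0=E_0$, and let $E^h_{k+1}$ minimize
\[E\longmapsto P(E)-\int_E f_k\,dx+\frac1h\int_{E\triangle E^h_k}\dist(x,\partial E^h_k)\,dx,\]
where $f_k$ is the mean of $f$ over $[kh,(k+1)h)$; then $(E_t)_t$ is, up to subsequences, the $L^1$- and Hausdorff-limit of $(E^h_{\lfloor t/h\rfloor})_h$ as $h\to0$. Normalize coordinates so that $x_1=(-r,0)$, $x_2=(r,0)$ and the disks are tangent at the origin; near the origin $\R^2\setminus E_0$ is the cusp $\Omega=\{(x,y):|x|<r-\sqrt{r^2-y^2}\}$, of half-width $\approx y^2/(2r)$ at height $y$, and we set $\Omega_\rho:=\Omega\cap\{|y|<\rho\}$.

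The claim about the two lobes is easy. By the comparison principle for flat flows, and since $D_r(x_i)\subset E_0$, the flat flow issuing from the single disk $D_r(x_i)$ — which for a short time coincides with the classical solution, a concentric disk of radius $\varrho(t)$ with $\dot\varrho=-1/\varrho+f\ge-1/\varrho-C_0$, so that $\varrho(t)\ge r-C(C_0,r)\,t$ — stays inside $E_t$; hence $D_{\eta r}(x_i)\subset E_t$ for suitable $\eta=\eta(C_0,r)\in(0,1)$ and $t<\delta$. The simple connectedness of the dumbbell component, and the fact that $E_t$ keeps a definite thickness away from the neck, follow from the uniform density estimates and $C^{1,\alpha}$ regularity available for the $\Lambda$-minimizers $E^h_k$ together with these comparison bounds.

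The heart of the matter is a one-step cusp-filling estimate. Suppose inductively that $E^h_k$ is made of two slightly deformed lobes joined by a neck containing $\Omega_{\rho_k}$, and compare the minimizer $E^h_{k+1}$ with the competitor $E^h_{k+1}\cup\Omega_\rho$, $\rho>\rho_k$: filling $\Omega$ between heights $\rho_k$ and $\rho$ deletes two almost vertical disk arcs of total length $\approx4(\rho-\rho_k)$ while inserting two horizontal caps of length $\approx2\rho^2/r$, so the perimeter drops by an amount comparable to $\rho-\rho_k$, \emph{first order} in the increment; the forcing contributes $O(C_0(\rho^3-\rho_k^3)/r)$; and the distance penalty costs $\frac1h\int_{\Omega_\rho\setminus\Omega_{\rho_k}}\dist(\cdot,\partial E^h_k)\,dx\lesssim\frac1h\,\rho_k^2(\rho-\rho_k)^2/r$, \emph{second order}, because the added points lie within distance $\rho-\rho_k$ of $\partial E^h_k$. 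Since the first-order perimeter gain dominates the rest as long as $\rho\ll\sqrt{r/C_0}$, minimality forces $\Omega_{\rho_{k+1}}\subset E^h_{k+1}$ with $\rho_{k+1}^3\ge\rho_k^3+c\,hr$, once $\rho_k\gtrsim(hr^2)^{1/4}$ (a height reached within the first few steps). Summing over $k\le t/h$ gives $\rho_{\lfloor t/h\rfloor}^3\ge c\,rt$ uniformly in small $h$; letting $h\to0$, $E_t\supset\Omega_{\rho(t)}$ with $\rho(t)\ge c(rt)^{1/3}$, which exceeds $t$ on $(0,\delta)$ when $\delta=\delta(C_0,r)$ is small. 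As at each height $|y|<\rho(t)$ the union of $\Omega_{\rho(t)}$ with the slightly eroded lobes spans an $x$-interval of length $\ge2r-Ct$ about the origin, it contains $B_t(0)$; hence $B_t(0)\subset E_t$, and $|E_t\setminus E_0|\ge|B_t(0)\setminus E_0|=|B_t(0)\cap\Omega|\ge c\,t^3$ by the thinness of $\Omega$.

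The main difficulty is twofold. First, one must upgrade the energy comparison to the \emph{geometric} inclusion $\Omega_{\rho_{k+1}}\subset E^h_{k+1}$: any indentation of the minimizer into the thin cusp can be filled flat, lowering the perimeter to first order while adding only a controlled distance penalty, contradicting minimality — but making this precise, and controlling the small defect sets, requires the density and regularity estimates for $E^h_{k+1}$. Second, one must propagate the inductive structural picture of $E^h_k$ (two lobes close to disks, joined by a monotone neck filling $\Omega_{\rho_k}$ and still attached to the lobes as these erode slightly) uniformly in $h$ for all $k\le\delta/h$: this calls for uniform density and $C^{1,\alpha}$ bounds for the incremental minimizers, the quantitative isoperimetric inequality to keep the lobes from splitting or spurious components from appearing, and careful bookkeeping to keep every error term strictly lower order than the first-order perimeter gain $\approx4(\rho-\rho_k)$ that drives the neck growth.
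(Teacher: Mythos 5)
Your strategy --- iterating a one-step ``fill the cusp'' competitor estimate at every step of the scheme and propagating an inductive structural picture of $E^{h,k}$ --- is genuinely different from the paper's, and it has a real gap at its core. The first-order perimeter gain $\approx 4(\rho-\rho_k)$ is available only if the two nearly vertical arcs you delete, i.e.\ the walls of the \emph{original} cusp $\Omega$, actually lie on $\partial E^{h,k+1}$ up to errors of lower order than $\rho-\rho_k$. But the lobes move with speed of order $1/r+C_0$, so after time $t$ their boundaries are displaced by order $t$ from the original circles, while the cusp has width $\approx y^2/r$ at height $y$: at all heights $y\lesssim\sqrt{rt}$ the displacement exceeds the width. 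If the lobes recede (e.g.\ $f\equiv 0$), the walls of $\Omega$ are strictly outside the current set there, so adding $\Omega_\rho$ \emph{creates} wall boundary of length $\approx 4(\rho-\rho_k)$ instead of deleting it; the competitor is then worse and minimality does not force $\Omega_{\rho_{k+1}}\subset E^{h,k+1}$. Thus the inductive hypothesis (``two lobes close to the original disks, joined by a neck filling $\Omega_{\rho_k}$ and still attached to them'') is exactly what is not established, and it is not mere bookkeeping: the region that must be filled is an $h$- and $k$-dependent neck adapted to the current lobes, not the fixed cusp $\Omega$.

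Even granting the filling of $\Omega_{\rho(t)}$ with $\rho(t)\sim (rt)^{1/3}$, your final assembly does not give the statement: $\Omega$ has width $y^2/(2r)$ at height $y$, hence width zero at the origin, while the lobes are only known to contain disks shrunk by $\sim t$; at heights $|y|\lesssim\sqrt{rt}$ the union of $\Omega_{\rho(t)}$ with the eroded lobes leaves a gap of order $t$ around the tangency point, so neither $B_t(0)\subset E_t$ nor the connectedness and simple connectedness of the dumbbell follows from your ingredients. What is missing is control of the \emph{lateral} opening of the neck at linear rate, and this is precisely how the paper proceeds: a single competitor estimate at the first step shows that $E^{h,1}$ contains a rectangle of half-height $\eta h^{1/4}$ across the tangency point; then a fixed dumbbell barrier $G_h$, with neck of constant curvature $-1/(3\eps)$ and disks $D_{1-2\eps}(\pm e_1)$ kept well inside the lobes by the comparison Lemma \ref{compa}, is placed inside $E^{h,1}$, and the Euler--Lagrange equation \eqref{euler} at a closest point yields $\inf_{\R^2\setminus E^h_t}d_{G_h}\ge t$ for $t\le\delta$. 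This gives at once the ball of radius $t$, the simply connected dumbbell containing $D_{\eta r}(x_i)$ and the $t^3$ area gain, with no structural induction on $E^{h,k}$. (Two smaller points: your short-time description of the lobes and the simple connectedness are asserted rather than proved, and the scheme is only known to converge in $L^1$, locally uniformly in time, not in the Hausdorff sense you invoke.)
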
 
This theorem is also relevant for the second issue we want to deal with, i.e., the characterization of stationary sets, as it shows that the union of two equal tangent disks is not stationary for the flat flow.

\subsection{Characterization of stationary sets}   When the forcing term $f\equiv c_0$ equation \eqref{flow} can be regarded as the gradient flow of the following energy
\beq\label{energia}
\mathcal E(E)=P(E)-c_0|E|,
\eeq
where $P(E)$ stands for the perimeter of $E$ and $|E|$ for its Lebesgue measure. Therefore one might think that $E_0$ is stationary for the flow if and only if it is critical for the energy \eqref{energia}, i.e., it satisfies $k_{E_0}=c_0$ on $\pa E_0$ in a weak sense. Indeed if $E_0$ is stationary then it also critical, while the converse  is certainly true when $E_0$ is smooth, i.e., is given by a union of finitely many disks with equal radii and mutually disjoint closures (see \cite{DM} for a characterization of critical sets in any dimension, even in the nonsmooth case).  However, Theorem~\ref{thm1} shows that the two notions  do not coincide since the union of two tangent disks of equal radii is critical as it has constant mean curvature in the weak sense, but not stationary.  Here we show that a set $E$ is \emph{stationary} for the  flow \eqref{flow} when  $f \equiv c_0$ if and only if  it is a union of disks with radius $r = 1/c_0$  with positive distance to each other. More precisely we have the following.

\begin{theorem}
\label{corollary}
Assume  $E_0 \subset \R^2$ is a bounded  set of finite perimeter.   Then $E_0$ is stationary (see Definition \ref{stationary}) for the  flow \eqref{flow} with  $f \equiv c_0$ if and only if there are points  $x_1, \dots, x_N$  such that  $|x_i -x_j|  > 2r$ for $i \neq j$,  with $r = 1/c_0$, and  
\[
E_0 = \bigcup_{i =1}^N D_r(x_i).
\]
\end{theorem}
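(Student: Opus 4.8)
The plan is to prove the two implications separately; the forward one is essentially known, so the converse is the heart of the matter. \emph{Sufficiency.} If $E_0=\bigcup_{i=1}^N D_r(x_i)$ with $r=1/c_0$ and $|x_i-x_j|>2r$ for $i\ne j$, then the disks have pairwise disjoint closures, so $E_0$ is smooth and $k_{E_0}\equiv c_0$ on $\pa E_0$; in particular $E_0$ is critical for $\mathcal E$. As recalled in the Introduction, for such smooth sets criticality is equivalent to being stationary. Concretely, one checks that $E_0$ itself solves the incremental minimization problem of Section~\ref{due} for all small time steps $h$: writing that functional as $F\mapsto P(F)-c_0|F|+\tfrac1h\int_{F\triangle E_0}\dist(x,\pa E_0)\,dx$, the quantity $P(F)-c_0|F|$ can fall below $\mathcal E(E_0)$ only by an amount controlled by $C(E_0)\int_{F\triangle E_0}\dist(x,\pa E_0)\,dx$ (the isoperimetric inequality together with the fact that $m=\pi r^2$ is a critical point of $m\mapsto 2\sqrt{\pi m}-c_0 m$, plus coercivity of the penalization for far-away competitors), and this loss is beaten by the penalization as soon as $1/h>C(E_0)$. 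Hence the static flow $E_t\equiv E_0$ is a flat flow.

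\emph{Necessity.} Assume $E_0$ is stationary. First I would show that $E_0$ is critical, i.e.\ $k_{E_0}=c_0$ on $\pa^* E_0$ in the weak sense. Indeed a stationary set minimizes the incremental functional above with datum $E_0$ for all small $h$, so its first variation along any $X\in C^1_c(\R^2;\R^2)$ vanishes; since $\dist(\,\cdot\,,\pa E_0)\equiv 0$ on $\pa^* E_0$ the penalization term contributes nothing, and one is left with $\int_{\pa^* E_0}(\diver_\tau X-c_0\,X\cdot\nu)\,d\mathcal H^1=0$. By the classification of critical sets in the plane (\cite{DM}) this forces $E_0=\bigcup_{i=1}^N D_r(x_i)$ with $r=1/c_0$ and $|x_i-x_j|\ge 2r$ for $i\ne j$, that is, finitely many disks of the right radius with pairwise disjoint interiors but, a priori, possibly touching boundaries. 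It therefore remains only to rule out tangencies.

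Here is where Theorem~\ref{thm1} enters. Suppose, for a contradiction, that $D_r(x_1)$ and $D_r(x_2)$ are tangent at a point $p$, and pick $\rho>0$ so small that $B_\rho(p)$ meets none of the other disks, so that $E_0\cap B_\rho(p)$ coincides with a pair of tangent disks of radius $r$. On a short time interval the flat flow near $p$ behaves exactly as in the two-disk situation of Theorem~\ref{thm1}: the expanding sub-barriers used there to force a neck of width comparable to $t$, and hence a volume gain $|E_t\setminus E_0|\gtrsim t^3$, can be localized inside $B_\rho(p)$, since they involve only competitors supported near the tangency point, while the remaining components influence the minimizing movements near $p$ only through lower-order boundary terms on the relevant space-time scale. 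Consequently $|E_t\setminus E_0|\ge c\,t^3>0$ for all small $t>0$, contradicting $E_t\equiv E_0$. Hence no two of the disks are tangent, $|x_i-x_j|>2r$ for $i\ne j$, and the proof is complete.

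I expect the main obstacle to be precisely this last localization step. Theorem~\ref{thm1} is stated only for $E_0$ equal to exactly two tangent disks, and one must verify that its proof — the explicit expanding barriers and the quantitative lower bound on the opening neck — is genuinely local and is not disturbed by the presence of the other disks, which do remain a fixed positive distance away throughout $(0,\delta)$. If no clean locality principle for the flat flow is at hand, the alternative is to rerun the barrier construction directly for the multi-disk configuration, exploiting the fact that on $(0,\delta)$ the extra disks cannot interfere with the neck being created between $D_r(x_1)$ and $D_r(x_2)$.
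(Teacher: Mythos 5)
Your necessity argument breaks down at its very first move. From stationarity (Definition \ref{stationary}) you infer that $E_0$ itself minimizes the incremental functional $\mathcal{F}(\cdot;E_0)$ for all small $h$, and you then take a first variation to get $k_{E_0}=c_0$ weakly. But stationarity only says that every flat flow, i.e.\ every limit as $h\to0$ of the discrete schemes, coincides with $E_0$; this yields $|E^{h,1}\Delta E_0|\to 0$, not $E^{h,1}=E_0$, and a priori the discrete minimizers could differ from $E_0$ at every step (with strictly smaller value of $\mathcal F(\cdot;E_0)$) while still converging to $E_0$. So you are not entitled to a vanishing first variation at $E_0$, and criticality does not follow this way. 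The paper obtains criticality by a genuinely different route (Lemma \ref{stat-crit}): stationarity together with Proposition \ref{MSS}(iv) gives $\sup_{h<t\leq T}|E_t^h\Delta E_0|\leq\delta$ for all small $h$, the dissipation inequality from Proposition \ref{MSS2} then makes $\int\int_{\pa E_t^h}(k_{E_t^h}-c_0)^2$ arbitrarily small, and the quantitative Alexandrov estimate of Lemma \ref{alex}, applied at a good time slice chosen by the mean value theorem, shows that $E_t^h$, hence $E_0$, is $L^1$-close and finally equal to a union of essentially disjoint disks of radius $1/c_0$. Some argument of this kind at the level of the approximate flows is unavoidable. A smaller but real issue affects your sufficiency part: checking that $E_0$ solves the incremental problem only shows that the constant family is \emph{one} flat flow, whereas Definition \ref{stationary} quantifies over \emph{all} flat flows starting from $E_0$; moreover the key quantitative claim (deficit of $P-c_0|\cdot|$ controlled by $C(E_0)\int_{F\Delta E_0}\dist(x,\pa E_0)\,dx$ for all competitors) is asserted rather than proved.

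For the exclusion of tangencies you identify the right tool, Theorem \ref{thm1}, but you leave the decisive step — localizing the barrier construction in the presence of the other disks — as an acknowledged open obstacle. The paper sidesteps localization entirely by a comparison trick you should adopt: set $F_0=D_r(x_1)\cup D_r(x_2)\subset E_0$ and let $(F_t)_t$ be a flat flow starting from $F_0$ with the \emph{constant forcing} $c_0-\eps$. Since $F_0\subset E_0$ and $c_0>c_0-\eps$, Proposition \ref{compa prin}(i) gives $F_t^{h}\subset E_t^{h}$ at the discrete level, hence $F_t\subset E_t$ along a common subsequence. Theorem \ref{thm1} only requires a bounded forcing satisfying \eqref{f1}, so it applies to $(F_t)_t$ and yields that $F_t$ contains a ball of radius $t$ centered at the tangency point; no third disk of radius $r$ can touch that point, so the area gained lies outside $E_0$ and $|E_t\setminus E_0|\geq c\,t^3$, contradicting stationarity. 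This removes the need for any locality principle or re-run of the barrier construction.
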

The fact that any stationary set is a union of disjoint disks follows from a sharp quantitative version of the Alexandrov theorem in the plane, see Lemma~\ref{alex}, while the fact that the disks must be at positive distance apart is a consequence of Theorem~\ref{thm1}.

We remark that the same type of classification holds true in the framework of level-set solutions, as recently shown in \cite[Theorem 4.7]{GMT}. The general $n$-dimensional case remains open also for the viscosity solutions, see \cite{GTZ}.

\subsection{Long time behavior}   We now address the long time behavior of the flat flow under the assumption
that the forcing term is asymptotically  constant, namely that it satisfies 
 \beq 
\label{f2}
\int_0^\infty |f(s) -c_0|^2\, ds <\infty.
\eeq

 In the next theorem our goal is to   characterize the possible limit sets and we show in particular that the asymptotically stationary sets are given once again by a union of disjoint disks, which however can be tangent. Precisely we  show that either, up to a diverging sequence $t_j$ of times, the area of $E_{t_j}$ blows up or  the sets $(E_t)_t$ converge up to a translation  in the Hausdorff sense to a disjoint union of  disks with equal radii. 

\begin{theorem}
\label{thm2}
Assume  $E_0 \subset \R^2$ is a bounded  set of finite perimeter. Let $(E_t)_t$ be a flat flow of \eqref{flow} starting from $E_0$ and  assume \eqref{f1} and  \eqref{f2} with $c_0 >0$, and  
\[
\sup_{t >0} |E_t| < \infty.
\]
Then there exist $N\in\mathbb N$ and $x_i(t):(0,+\infty)\to\R^2$, with $i=1,\dots,N$ and $|x_i(t) -x_j(t)| \geq 2/c_0$ for $i \neq j$, such that, setting $F_t = \cup_{i=i}^N D_{1/c_0}(x_i(t))$ 
\[
\lim_{t\to\infty}\,\sup_{x\in E_t\Delta F_t}d_{\pa F_t}(x)=0.
\]  
\end{theorem}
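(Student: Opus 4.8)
The plan is to exploit the gradient-flow structure of the flow together with the energy dissipation estimate, in the spirit of \L{}ojasiewicz-type arguments, but using the quantitative Alexandrov theorem (Lemma~\ref{alex}) as a substitute for an explicit \L{}ojasiewicz inequality. First I would record the basic energy estimate for the flat flow: if $\mathcal E$ denotes the energy \eqref{energia} with constant $c_0$, then along the flat flow one has, for the discrete-in-time scheme underlying the minimizing movements construction and hence in the limit, a dissipation inequality of the form
\[
\mathcal E(E_T) + \int_0^T \int_{\pa E_t} V_t^2 \, d\mathcal H^1 \, dt \;\leq\; \mathcal E(E_0) + C\!\int_0^T |f(t)-c_0|^2\,dt,
\]
where the last term is finite by \eqref{f2}. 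Together with the area bound $\sup_t |E_t|<\infty$ and the isoperimetric inequality (which controls $P(E_t)$ from below), this shows $\mathcal E(E_t)$ is bounded and, more importantly, that $\int_0^\infty \int_{\pa E_t} V_t^2\, d\mathcal H^1\, dt < \infty$. Hence there is a diverging sequence $t_j$ along which $\int_{\pa E_{t_j}} V_{t_j}^2 \, d\mathcal H^1 \to 0$; by the flow equation this means the curvature $k_{E_{t_j}}$ is close to $f(t_j)$, and hence close to $c_0$, in $L^2(\pa E_{t_j})$.

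Next I would apply the quantitative Alexandrov lemma (Lemma~\ref{alex}) to conclude that each $E_{t_j}$ is, up to a small error, a finite disjoint union of disks of radius $1/c_0$; in particular the perimeter $P(E_{t_j})$ converges to an integer multiple $N \cdot 2\pi/c_0$, which pins down $N$ and forces $\mathcal E(E_{t_j}) \to \mathcal E$ of the limiting configuration. Since $\mathcal E(E_t)$ is (essentially) monotone up to the summable error from \eqref{f2}, the full limit $\lim_{t\to\infty}\mathcal E(E_t)$ exists and equals $N\pi/c_0^2$, the energy of $N$ disjoint unit disks. Now I would upgrade convergence along the sequence $t_j$ to convergence for all $t\to\infty$: the energy gap $\mathcal E(E_t) - N\pi/c_0^2$ tends to zero, and a second application of Lemma~\ref{alex} (in its stability form, bounding the Hausdorff distance to a union of disks by a power of the energy deficit plus the $L^2$ curvature deficit) shows that for all large $t$ the set $E_t$ is Hausdorff-close to $F_t := \bigcup_{i=1}^N D_{1/c_0}(x_i(t))$ for suitable centers $x_i(t)$. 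The separation $|x_i(t)-x_j(t)|\ge 2/c_0$ is automatic since the disks are disjoint (their interiors cannot overlap for a set of finite perimeter realizing this energy); note tangency is allowed, which is consistent with Theorem~\ref{thm1} only being an instantaneous, short-time statement and not precluding tangent configurations as limits.

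The main obstacle I anticipate is controlling the motion of the centers $x_i(t)$ and ruling out that pieces of $E_t$ "leak" or that the number of disks $N$ oscillates in time: a priori the energy deficit could be achieved by configurations that drift apart or by transient small components. To handle this I would show that once $E_t$ is sufficiently close to a union of $N$ disks, the constraint $\sup_t|E_t|<\infty$ together with the dissipation bound prevents the creation of new components and prevents centers from escaping to infinity on the relevant time scales; more precisely, the velocity integral $\int_{t}^{t+1}\int_{\pa E_s} V_s^2 \le \epsilon(t)\to 0$ forces each $x_i$ to move by at most $\epsilon(t)^{1/2}$ per unit time in a way compatible with a Cauchy-type argument, while the quantitative isoperimetric/Alexandrov estimate controls the shape. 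The formula $\sup_{x\in E_t\Delta F_t} d_{\pa F_t}(x) \to 0$ then follows by combining the shape estimate with the (possibly slow, but controlled) drift of the centers, choosing $F_t$ to track $E_t$ at each time. This last patching step — translating $L^2$-in-time smallness of the velocity into a genuine Hausdorff-convergence statement with a moving reference configuration — is where the technical work concentrates, and it is likely where the authors invest a separate lemma; I would isolate it as such.
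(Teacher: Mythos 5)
Your overall skeleton matches the paper's: the dissipation inequality for the minimizing-movements scheme plus \eqref{f2} gives an integral bound on the curvature deficit, the quantitative Alexandrov lemma (Lemma~\ref{alex}) identifies good times at which $E^h_t$ is Hausdorff-close to a union of unit disks, and the almost-monotonicity of the energy together with the quantization $\mathcal E\approx \pi N$ forces the number of disks to stabilize at some $N$. Up to that point your argument is essentially the one in the paper (modulo the fact that all of this must be carried out at the level of the approximate flows $E^h_t$, where the Euler--Lagrange equation and the curvature are actually defined, with estimates uniform in $h$, before passing to the limit).

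The genuine gap is in the final step, and your proposed fix would not work as stated. First, there is no ``stability form'' of Lemma~\ref{alex} controlled by the energy deficit alone: the lemma requires smallness of $\|k_{E}-1\|_{L^1(\pa E)}$ (or $L^2$), and the dissipation bound only makes this quantity small at times outside an exceptional set $J_h$ of small measure, not at every large time, so ``for all large $t$'' does not follow from a second application of the lemma. Second, translating $\int\int V^2<\infty$ into motion bounds for the centers only yields $L^1$-in-time control, i.e.\ estimates of the type $|E^h_t\Delta E^h_s|\le C\sqrt{t-s}$ (Proposition~\ref{MSS2}(iii)); measure-closeness of this kind cannot rule out thin protrusions or tiny remote components at the bad times, and hence cannot produce the statement $\sup_{x\in E_t\Delta F_t}d_{\pa F_t}(x)\to 0$, which is a sup-distance statement. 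The paper's missing ingredient is a barrier/comparison lemma (Lemma~\ref{compa}): if at some time $t_0$ the approximate flow is $\delta$-close in the Hausdorff sense to a disjoint union of unit disks, then by comparing with shrinking disks (an inner barrier inside each component and an outer barrier of small disks placed along the level set $\{d_{\pa F}=5\delta^{1/4}\}$, both evolved with constant forcing $-\Lambda$ and the weak comparison principle of Proposition~\ref{compa prin}) the flow stays $5\delta^{1/4}$-close on the whole interval $[t_0,t_0+\sqrt\delta)$. Since $|J_h|\le\delta\ll\sqrt\delta$, these intervals cover all sufficiently large times, which is exactly the patching step you flagged but did not supply; some comparison-type argument of this kind (rather than velocity-based center tracking) is what is needed to close the proof.
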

We stress here the fact that the initial set $E_0$ in the above theorem is an arbitrary bounded set of finite perimeter without further regularity assumption.
 It is plausible that in Theorem~\ref{thm2} the convergence holds not just up to translation.
 
 Previous results dealt with special classes of sets in any dimension such as convex or star-shaped initial sets, see for instance \cite{BCCN} and \cite{KK}. We also mention \cite{MoPoSpa} where the long-time behavior of the discrete  Euler implicit scheme for the volume preserving mean curvature flow is addressed for any arbitrary bounded initial set with finite perimeter. 
 The long time behavior of the forced mean curvature flow in the context of viscosity level-set solutions was also investigated in \cite{GTZ} and \cite{GMT} where it is shown that under certain assumptions the solutions converge to a stationary solution of the level-set equation. The problem of classifying the latter is open in general.

We now show that it is indeed possible to obtain as a limit of the flow \eqref{flow} a union of essentially disjoint disks such that at least two of them are tangent.  To this end we take  $G$ to be the ellipse
\[
G = \{ (x_1,x_2) \in \R^2 : \,  a^2 x_1^2 + x_2^2 < 1 \} \qquad \text{with } \, a>1
\]
and we show the following theorem.
\begin{theorem}
\label{thm3}
Let $e_1 = (1,0)$ and $G$ as above. Denote by $\rho= \frac{1}{\sqrt{a}}$ the radius such that $|D_\rho| = |G|$. The volume preserving mean curvature flow $(E_t)_t$, starting from 
\[
E_0 = (G - \rho e_1 ) \cup  (G + \rho e_1 ),
\] 
is well defined in the classical sense for all $t >0$ and converges exponentially fast to the union of two tangent disks
\[
E_t \to  (D_\rho - \rho e_1 ) \cup  (D_\rho + \rho e_1 ). 
\]
\end{theorem}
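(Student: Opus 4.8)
The plan is to use the symmetry of $E_0$ to reduce the evolution to that of a single ellipse, to invoke the classical theory of the area-preserving planar curvature flow for that bubble, and then to rule out collisions between the two bubbles, the tangency occurring only in the limit. First I would let $(\Gamma_t)_t$ be the classical volume preserving mean curvature flow in $\R^2$ starting from the convex curve $\partial G$, whose short-time existence is standard. By the classical result of Gage on the area-preserving curve shortening flow in the plane, this flow is globally defined, stays convex, and converges smoothly and exponentially fast to a circle enclosing the same area $|G|=\pi/a=\pi\rho^2$; since $\partial G$ is symmetric about both coordinate axes and the flow preserves symmetries, each $\Gamma_t$ is symmetric about both axes, hence the barycenter of the enclosed region $\Omega_t$ stays at the origin and the limit circle is $\partial D_\rho$. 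Set $\widetilde E_t:=(\Omega_t+\rho e_1)\cup(\Omega_t-\rho e_1)$. Provided the two translated copies have disjoint closures for every $t\ge 0$, the boundary $\partial\widetilde E_t$ is, for each $t$, a disjoint union of two congruent convex curves; since $\int k\, d\mathcal H^1=2\pi$ on each copy, the average of the curvature over all of $\partial\widetilde E_t$ equals its average over a single component, which is exactly the nonlocal term driving $(\Gamma_t)_t$. Therefore, by translation invariance of the flow, $t\mapsto\partial\widetilde E_t$ solves $V=-k+\bar k$, so it is a classical volume preserving mean curvature flow; as $\widetilde E_0=E_0$, by uniqueness it is the flow $(E_t)_t$ of the statement, which is thus globally defined and classical, and the exponential convergence of $\Gamma_t$ gives $E_t\to(D_\rho-\rho e_1)\cup(D_\rho+\rho e_1)$ exponentially, the two disks being tangent at the origin.

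It remains to prove that the two copies never collide. Let $w(t):=\max\{x_1: x\in\overline{\Omega_t}\}$; by convexity and symmetry this maximum is attained at the single point $P_t=(w(t),0)$, whose outer normal is $e_1$, and $\overline{\Omega_t}$ has $x_1$-extent exactly $[-w(t),w(t)]$. One checks that the two translated copies have disjoint closures if and only if $w(t)<\rho$; moreover $w(0)=1/a<1/\sqrt a=\rho$ because $a>1$, whereas $w(t)\to\rho$ by the previous paragraph. Hence it suffices to show that $w$ is strictly increasing, and since $w'(t)=V(P_t)=-k(P_t)+\bar k(t)$, it is enough to prove $k(P_t)<\bar k(t)$ for every finite $t$. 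This in turn follows once we know $k(P_t)=\min_{\Gamma_t}k$, because $\bar k(t)$ lies strictly between $\min_{\Gamma_t}k$ and $\max_{\Gamma_t}k$ unless $\Gamma_t$ is a circle, which never occurs at a finite time.

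To see that the curvature minimum stays at the rightmost point, I would parametrize $\Gamma_t$ by its outer normal angle $\theta\in\R/2\pi\Z$. On $\partial G$ the curvature, as a function of $\theta$, has exactly four nondegenerate critical points: minima at $\theta=0,\pi$, which are the rightmost and leftmost points (the endpoints of the minor axis), and maxima at $\theta=\pi/2,3\pi/2$. Along the flow $k(\theta,t)$ solves a scalar uniformly parabolic equation, so $k_\theta$ solves a linear parabolic equation on the circle; by the Sturm--Angenent oscillation theorem the number of sign changes of $\theta\mapsto k_\theta(\theta,t)$ is nonincreasing. The double symmetry of the flow forces $\theta\in\{0,\pi/2,\pi,3\pi/2\}$ to remain critical points and to account for at least four sign changes, so this number stays equal to $4$ for all $t$; consequently no new critical point of $k$ appears, and by continuity in $t$ the points $\theta=0,\pi$ remain minima. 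Therefore $k(P_t)=\min_{\Gamma_t}k$, which closes the argument.

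The main obstacle is exactly the non-collision, and it is genuinely quantitative: with the translation amount $\rho e_1$ the two limiting disks are tangent, so any smaller separation would force the classical flow to become singular in finite time, and hence no soft compactness argument can suffice. One really needs the monotonicity of the width, that is, the fact that the curvature at the rightmost point of the evolving ellipse stays strictly below the mean curvature throughout the flow; establishing this --- through the Sturm--Angenent count above or an equivalent fine control of the single-bubble geometry --- is the technical heart of the proof.
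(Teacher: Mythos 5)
Your proposal is correct in substance, but it handles the crux of the theorem --- that the two evolving bubbles never touch at any finite time --- by a genuinely different argument than the paper. Both proofs share the same skeleton: reduce to the flow $(G_t)_t$ of a single ellipse (global existence, preserved convexity and symmetry, exponential convergence to $D_\rho$, and $G_t\neq D_\rho$ for finite $t$, which both you and the paper take from the classical literature), observe that the union of the two translates is the volume-preserving flow as long as the copies stay disjoint (the paper does this by freezing $f(t)=\bar k_{G_t}$ and invoking coincidence of the flat flow with the unique classical solution via \cite{CN}), and reduce everything to showing that $G_t$ never leaves the strip $\{-\rho<x_1<\rho\}$. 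At that point the paper argues variationally: each discrete minimizer of the scheme can be taken convex, doubly symmetric and invariant under Bonnesen circular symmetrization (Lemma \ref{bonnesen} shows the symmetrization decreases the dissipation), this passes to the limit flow, and if $G_{t_0}$ touched the lines $x_1=\pm\rho$ the Bonnesen symmetry would force $G_{t_0}\cap\pa D_\rho=\pa D_\rho$, hence $D_\rho\subset G_{t_0}$ and, by conservation of area, $G_{t_0}=D_\rho$, which is excluded. You instead argue parabolically: $w'(t)=\bar k(t)-k(P_t)>0$ once the curvature minimum is known to stay at the rightmost point, which you get from the Sturm--Angenent zero-counting theorem applied to $k_\theta$ in the normal-angle gauge, the double symmetry pinning exactly four zeros. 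Your route is a legitimate alternative: it stays within classical parabolic theory, gives the sharper quantitative statement that the width increases strictly to $\rho$, and correctly isolates non-collision as the heart of the matter; its cost is heavier machinery (Angenent's theorem, uniform strict convexity on compact time intervals so the normal-angle parametrization is uniformly parabolic, and backward non-circularity), whereas the paper's symmetrization argument fits its minimizing-movements framework and needs only the elementary Lemma \ref{bonnesen}. When writing yours up, do justify that the four symmetry-forced zeros of $k_\theta$ are sign changes and remain simple for all $t$ (otherwise Angenent's count would drop below four); it is precisely this persistent nondegeneracy that lets you conclude by continuity that $\theta=0,\pi$ remain the minima.
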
 
Note that Theorem \ref{thm3} shows that a flat flow of \eqref{flow} may converge to tangent disks.  Indeed the classical solution of the flow in Theorem \ref{thm3} is well defined and  smooth for all times and we may write it in the form \eqref{flow} with $f(t) =\medintinrigo_{\pa E_t}k_{E_t}$ and the flat flow agrees with it.  Moreover, by the exponential convergence we have that $f(t)$ satisfies \eqref{f2}.  

We note that in  Theorem \ref{thm3} the flow $(E_t)_t$ remains smooth and diffeomorphic to a union of two disks. Only the limit set is non-smooth.

\begin{figure}
\begin{tikzpicture}
\begin{scope}[yscale=.6,xscale=.6]
\clip (-7, -5) rectangle (7, 5); 

\filldraw[fill=white!80!black] (3,0) ellipse (2 and 4.5); 
\draw (3,0) circle (3); 
\filldraw[fill=black] (3,0) circle (.05);
\draw (3,0.4) node {$(\rho,0)$}; 

\begin{scope}[yscale=1,xscale=-1]  
\filldraw[fill=white!80!black] (3,0) ellipse (2 and 4.5); 
\draw (3,0) circle (3); 
\filldraw[fill=black] (3,0) circle (.05);
\draw (3,0.4) node {$(-\rho,0)$};
\end{scope}

\draw[->] (-7,0) -- (7,0); 
\draw (6.8,0.5) node 
                       {$x_1$};
\draw[->] (0,-5) -- (0,5); 
\draw (0.5,4.8) node 
                       {$x_2$};
             
\end{scope}
\end{tikzpicture}
\caption{The union of two ellipses converges to the union of two tangent disks.}
\end{figure}
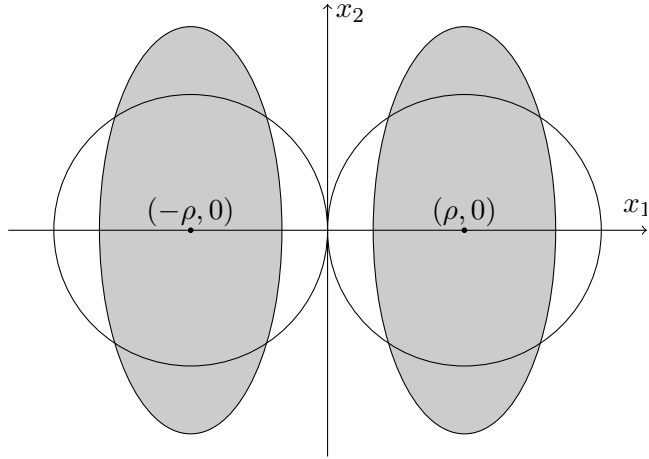

\section{Notation and Preliminary results}\label{due}
Since the results of this section hold in any dimension we state them in full generality and we will go back to the planar case in the next sections.

Given a set $A \subset \R^n$   the distance function $d_A : \R^n \to [0,\infty)$ is defined as usual
\[
d_A(x) := \inf_{y \in A} |x - y|
\] 
and we denote the signed distance function by $\bar{d}_A : \R^n \to \R$, 
\[
\bar{d}_A(x) := \begin{cases} - d_{\R^n \setminus A}(x) , \,\,\text{for }\, x \in A \\
d_A(x) , \,\, \text{for }\, x \in \R^n \setminus A.  \end{cases} 
\]
Then clearly it holds $d_{\pa A} = |\bar d_A|$.

For a set of finite perimeter  $E \subset \R^n$  we denote its perimeter by $P(E)$ and recall that for regular enough set it holds $P(E) = \Ha^{n-1}(\pa E)$ \cite{AFP, Ma}.  For a measurable set $|E|$ denotes its Lebesgue measure. We denote by $H_E$ the sum of the principal  curvatures of $E$, while in the planar case we write $k_E$. We denote the disk with radius $r$ centered at $x$ by $D_r(x)$ and in the higher dimensional case we write $B_r(x)$ instead.

We consider  solutions of \eqref{flow} constructed via the minimizing movement scheme.  We fix a small time step $h>0$ and a bounded  set of finite perimeter $E_0 \subset \R^n$,  which is our initial set $E^{h,0} = E_0$. We obtain a sequence of set 
$(E^{h,k})_{k=1}^\infty$ by iterative minimizing procedure, where $E^{h,k+1}$ is a minimizer of the functional $\mathcal{F}_k(E; E^{h,k})$ defined as 
\beq
\label{min mov}
\mathcal{F}_k(E; E^{h,k}) =   P(E) + \frac{1}{h} \int_E \bar{d}_{E^{h,k}} \, dx  - \bar f(kh) |E|,
\eeq
where $\bar{d}_{E^{h,k}}$ is the signed distance defined above and $\bar f(kh) = \fint_{kh}^{(k+1)h} f(s)\, ds$.  We define the approximate flat  flow $(E_t^h)_{t > 0}$ by
\beq
\label{disc flow}
E_t^h = E^{h,k},  \qquad \text{for } \,  (k-1)h < t \leq k h 
\eeq
and we set $\bar f(t)=\bar f(kh)$ for $(k-1)h < t \leq k h$.  Any  cluster point of $E_t^h$ as $h$ goes to zero is called a flat flow for the equation \eqref{flow}. 

We warn the reader that in the above definition it is understood that we identify $E^{h,k}$ with its set of its points of density $1$ so that there is no ambiguity in the definition of $\bar{d}_{E^{h,k}}$.

Recall that if $E_0$ and $f$ are smooth then any flat flow coincide with the classical solution of \eqref{flow} as long as the latter remains smooth, see \cite{CN}.

In general, the problem \eqref{min mov} does not admit a unique minimizer and thus there is no unique way to define the approximate flat  flow $(E_t^h)_{t > 0}$. Also  the flat flow may not be unique when fattening occurs. However, as we mentioned in the introduction,  in the case when the initial set  and the forcing term are  smooth,  the flat flow is unique   for a short time interval and agrees with the classical solution. 

Even if there is no uniqueness, the approximate flat flow  satisfies the following weak comparison principle, see for instance the proof of Lemma~6.2 in \cite{CMP}.
\begin{proposition}
\label{compa prin}
Assume  $f_1, f_2:[0,\infty) \to \R$ satisfy \eqref{f1}. Let   $E_0, F_0$ be two bounded sets of finite perimeter and let  $(E_t^h)_t$ be an approximate flat flow  with forcing term $f_1$ starting from $E_0$ and $(F_t^h)_t$ an  approximate flat flow with  forcing term $f_2$ starting from $F_0$. 
\begin{itemize}
\item[(i)] If  $F_0 \subset E_0$ and $f_1 > f_2$, then for every $t>0$ it holds $F_t^h \subset E_t^h$. 
\item[(ii)]  If  $E_0 \subset \R^n \setminus  F_0$ and $-f_2 > f_1$, then for every $t>0$ it holds $E_t^h \subset \R^n \setminus  F_t^h$.
\end{itemize}
\end{proposition}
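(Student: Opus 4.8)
The plan is to prove both inclusions by induction on the time-step index $k$. By \eqref{disc flow} it suffices to show $F^{h,k}\subset E^{h,k}$ for every $k$ in case (i), and $E^{h,k}\cap F^{h,k}=\emptyset$ for every $k$ in case (ii); since the case $k=0$ is exactly the hypothesis on the initial data, everything reduces to a single inductive step from level $k$ to level $k+1$, carried out for the chosen minimizers $E^{h,k+1}$ of $\mathcal F_k(\,\cdot\,;E^{h,k})$ with forcing $f_1$ and $F^{h,k+1}$ of $\mathcal F_k(\,\cdot\,;F^{h,k})$ with forcing $f_2$. Moreover it is enough to prove $|F^{h,k+1}\setminus E^{h,k+1}|=0$ (resp.\ $|E^{h,k+1}\cap F^{h,k+1}|=0$), since this already yields the corresponding inclusion of the density-one representatives that we have agreed to work with.

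Before the inductive step I would record two elementary pointwise inequalities turning the hypothesis at level $k$ into analytic information: if $F^{h,k}\subset E^{h,k}$ then $\bar d_{E^{h,k}}\le\bar d_{F^{h,k}}$ on $\R^n$, and if $E^{h,k}\cap F^{h,k}=\emptyset$ then $\bar d_{E^{h,k}}+\bar d_{F^{h,k}}\ge 0$ on $\R^n$. Both follow from a short case analysis based on the triangle inequality; for instance, in the second case, for $x\in E^{h,k}$ the inclusion $F^{h,k}\subset\R^n\setminus E^{h,k}$ gives $d_{\R^n\setminus E^{h,k}}(x)\le d_{F^{h,k}}(x)$, i.e.\ $-\bar d_{E^{h,k}}(x)\le \bar d_{F^{h,k}}(x)$, and one argues symmetrically for $x\in F^{h,k}$ and trivially for $x$ outside both sets. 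Also, since $f_1>f_2$ a.e.\ (resp.\ $-f_2>f_1$ a.e.), the same strict inequality holds for the averages $\bar f_1(kh),\bar f_2(kh)$.

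For the inductive step in case (i), write $E:=E^{h,k+1}$ and $F:=F^{h,k+1}$, and compare the minimality of $E$ against the competitor $E\cup F$ and that of $F$ against $E\cap F$, that is, $\mathcal F_k(E;E^{h,k})\le\mathcal F_k(E\cup F;E^{h,k})$ and $\mathcal F_k(F;F^{h,k})\le\mathcal F_k(E\cap F;F^{h,k})$. Summing the two inequalities and using the submodularity of the perimeter $P(E\cup F)+P(E\cap F)\le P(E)+P(F)$ together with $|E\cup F|+|E\cap F|=|E|+|F|$ and $\mathbf 1_{E\cup F}+\mathbf 1_{E\cap F}=\mathbf 1_E+\mathbf 1_F$, all the surviving terms localize on $F\setminus E$ and one is left with
\[
0\le P(E)+P(F)-P(E\cup F)-P(E\cap F)\le\frac1h\int_{F\setminus E}\big(\bar d_{E^{h,k}}-\bar d_{F^{h,k}}\big)\,dx-\big(\bar f_1(kh)-\bar f_2(kh)\big)\,|F\setminus E|.
\]
The first term on the right is $\le 0$ by the distance inequality and the second is $<0$ unless $|F\setminus E|=0$; hence $|F\setminus E|=0$, which is the claim at level $k+1$. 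Case (ii) is identical after replacing the competitors by $E\setminus F$ for $E$ and $F\setminus E$ for $F$: using $P(E\setminus F)+P(F\setminus E)\le P(E)+P(F)$ the surviving terms now localize on $E\cap F$ and one obtains
\[
0\le -\frac1h\int_{E\cap F}\big(\bar d_{E^{h,k}}+\bar d_{F^{h,k}}\big)\,dx+\big(\bar f_1(kh)+\bar f_2(kh)\big)\,|E\cap F|,
\]
whose right-hand side is $\le 0$ since $\bar d_{E^{h,k}}+\bar d_{F^{h,k}}\ge 0$ and $\bar f_1(kh)+\bar f_2(kh)<0$, forcing $|E\cap F|=0$.

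I do not expect a genuine difficulty here: once the lattice competitors are chosen the argument is purely algebraic, and it is essentially the one used in \cite[Lemma~6.2]{CMP}. The only points deserving (minor) care are the verification of the two pointwise distance inequalities and the observation that it is precisely the \emph{strict} inequality between the forcing terms that makes the estimate close — with only $f_1\ge f_2$ (resp.\ $-f_2\ge f_1$) one would merely conclude that all the displayed inequalities are equalities, which is not enough by itself to get the inclusion.
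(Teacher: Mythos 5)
Your proof is correct and is essentially the argument the paper intends: the paper does not prove Proposition~\ref{compa prin} itself but refers to the proof of Lemma~6.2 in \cite{CMP}, which is exactly this discrete-in-time comparison via the lattice competitors $E\cup F$, $E\cap F$ (resp.\ $E\setminus F$, $F\setminus E$), submodularity of the perimeter, the monotonicity of the signed distance under inclusion (resp.\ disjointness), and the strict inequality between the forcing terms to absorb possible non-uniqueness of the minimizers. Your two auxiliary observations (the pointwise distance inequalities and the strict inequality passing to the averages $\bar f_i(kh)$) are precisely the points that make the induction close, so nothing is missing.
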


We need preliminary results on  the structure of the approximate flat flow constructed via \eqref{min mov}. We  note that if $E^{h,k+1}$ is a minimizer of $\mathcal{F}_{k}(\cdot, E^{h,k})$ then it is a  $\Lambda$-minimizer of the perimeter, see for instance \cite{MSS}, with $\Lambda \leq C/h$, see \cite{Ma} for the definition of $\Lambda$-minimizer. Then it follows that  $\partial E^{h,k+1}$ is $C^{1,\alpha}$-regular for all $\alpha\in(0,1)$ up to a singular set $\Sigma$ with Hausdorff dimension  at most $n-8$, see \cite{Ma}. Then  the Euler-Lagrange equation
\beq
\label{euler}
\frac{\bar{d}_{E^{h,k}}}{h} = -H_{E^{h,k+1}} + \bar  f(kh) \qquad \text{on } \, \pa E^{h,k+1}\setminus\Sigma,
\eeq
which holds in the weak sense, implies that  $\partial E^{h,k+1}\setminus\Sigma$ is $C^{2,\alpha}$-regular and satisfies \eqref{euler} in the classical sense.

\begin{lemma}
\label{distance}
Assume that $(E^{h,k})_k$ is a sequence obtained via minimizing movements \eqref{min mov} starting from a bounded set of  finite perimeter $E_0$  and assume that the forcing term satisfies \eqref{f1}. Then there is a constant $C_1$ such that for every $k = 0,1,2, \dots $  
\[
\sup_{x \in E^{h,k+1} \Delta E^{h,k}} d_{\pa E^{h,k}}(x) \leq C_1 \sqrt{h}.
\]
Moreover, there are constants $C_2>1$ and $c_1>0$ such that  for every $k =1,2,3, \dots$ it holds 
\[
|E^{h,k+1} \Delta E^{h,k}| \leq C_2 \left( l P( E^{h,k}) + \frac{1}{l} \int_{E^{h,k+1} \Delta E^{h,k}} |\bar d_{E^{h,k}}(x)| \, dx \right)
\]
for any $0 < l < c_1 \sqrt{h} $.
\end{lemma}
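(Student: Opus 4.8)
\emph{Plan and the $\sqrt h$-bound.} The two inequalities call for different arguments; the first I would obtain by comparison against a collapsing ball, the second by splitting $E^{h,k+1}\Delta E^{h,k}$ according to the distance from $\pa E^{h,k}$. For the first bound, observe that passing to complements turns the scheme \eqref{min mov} into the same scheme with forcing $-\bar f$, which again satisfies \eqref{f1}, so it suffices to estimate $d_{\pa E^{h,k}}(x_0)=\dist(x_0,E^{h,k})$ for $x_0\in E^{h,k+1}\setminus E^{h,k}$. Fix such an $x_0$, put $d:=\dist(x_0,E^{h,k})$, so that $B_d(x_0)\cap E^{h,k}=\emptyset$, and let $(F^h_t)_t$ be the approximate flat flow from $B_d(x_0)$ with constant forcing $-C_0-\delta$, $\delta>0$. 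Since $E^{h,k}\subset\R^n\setminus B_d(x_0)$ and $C_0+\delta>\bar f(kh)$, Proposition~\ref{compa prin}(ii) gives $E^{h,k+1}\subset\R^n\setminus F^{h,1}$. Now $F^{h,1}$ minimizes $G\mapsto P(G)+\int_G\big(\tfrac{|x-x_0|-d}{h}+C_0+\delta\big)\,dx$, because $\bar d_{B_d(x_0)}(x)=|x-x_0|-d$; as this potential is radial and strictly increasing, the isoperimetric inequality together with the bathtub principle forces the minimizer to be a concentric ball $B_{r^*}(x_0)$, with $r^*$ minimizing $\phi(r):=P(B_r(x_0))+\int_{B_r(x_0)}\big(\tfrac{|x-x_0|-d}{h}+C_0+\delta\big)\,dx$. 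A direct computation of $\phi$ shows $\min_{r\ge 0}\phi(r)<0=\phi(0)$, hence $r^*>0$, whenever $d>C_1\sqrt h$ for an explicit $C_1=C_1(n,C_0)$ (say for $h\le 1$); but then $x_0\in B_{r^*}(x_0)=F^{h,1}$, contradicting $x_0\in E^{h,k+1}\subset\R^n\setminus F^{h,1}$. Letting $\delta\to 0$ gives the bound.

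\emph{The splitting inequality.} Write $E^{h,k+1}\Delta E^{h,k}=A_l\cup B_l$, where $A_l$ is the part with $0<d_{\pa E^{h,k}}<l$ and $B_l$ the part with $d_{\pa E^{h,k}}\ge l$. On $B_l$ one has $|\bar d_{E^{h,k}}|=d_{\pa E^{h,k}}\ge l$, so $|B_l|\le \tfrac1l\int_{E^{h,k+1}\Delta E^{h,k}}|\bar d_{E^{h,k}}|\,dx$, the second term; and $A_l$ sits in the tubular neighborhood $N_l(\pa E^{h,k})$, so the whole matter reduces to the geometric estimate
\[
|N_l(\pa E^{h,k})|\le C_2\, l\, P(E^{h,k})\qquad\text{for }0<l<c_1\sqrt h .
\]
This fails for a generic set of finite perimeter, and this is where $k\ge 1$ enters: $E^{h,k}$ minimizes $P(\cdot)+\int_\cdot g$ with $g=\tfrac{\bar d_{E^{h,k-1}}}{h}-\bar f((k-1)h)$, and combining the first bound applied to the step $k-1\mapsto k$ with a short argument ruling out that $\pa E^{h,k}$ can lie farther than $C_1\sqrt h$ from $\pa E^{h,k-1}$ (a ball about such a boundary point would lie on one side of $\pa E^{h,k-1}$, hence — by the first bound — on the same side of $\pa E^{h,k}$, which is absurd), one gets $|g|\le\tfrac{2C_1}{\sqrt h}+C_0$ on the $(C_1\sqrt h)$-neighborhood of $\pa E^{h,k}$. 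Hence $E^{h,k}$ is a $\Lambda$-minimizer of the perimeter, at scales up to $\sim\sqrt h$ and with $\Lambda\lesssim 1/\sqrt h$, rather than the a priori $\Lambda\sim 1/h$ valid at all scales; this is exactly the range in which the classical volume and perimeter density estimates for $\Lambda$-minimizers hold at scales up to $c_1\sqrt h$, for a suitable $c_1=c_1(n,C_0)$. A Vitali covering of $\pa E^{h,k}$ by balls of radius $l<c_1\sqrt h$ whose $l/5$-subballs are disjoint (each carrying perimeter $\ge c(n)\,l^{n-1}$) then has at most $\lesssim P(E^{h,k})/l^{n-1}$ members, and $N_l(\pa E^{h,k})$ is covered by the doubled balls, so $|N_l(\pa E^{h,k})|\lesssim l\, P(E^{h,k})$. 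Adding $|A_l|$ and $|B_l|$ gives the claim with a suitable $C_2>1$.

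\emph{Main obstacle.} The first inequality is the rigorous form of the heuristic that in a step of size $h$ the interface cannot advance by more than $O(\sqrt h)$ no matter how curved it is; its only real content is the sign analysis of the one-step collapse of the ball, plus the reduction to one direction by complements. In the second inequality the delicate point is the scale upgrade: the bound $|N_l(\pa E^{h,k})|\le C_2 l P(E^{h,k})$ cannot be read off from the a priori $\Lambda\sim 1/h$ minimality — which would only reach $l\lesssim h$ — and it is exactly the first ($\sqrt h$) estimate that keeps the driving potential $\bar d_{E^{h,k-1}}/h$ of size $O(1/\sqrt h)$, not $O(1/h)$, throughout a full $\sqrt h$-neighborhood of $\pa E^{h,k}$, which is what lets the density estimates, and with them the covering argument, operate at scale $\sqrt h$.
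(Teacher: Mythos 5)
Your proposal is correct, and for the second inequality it follows the same route as the paper (which itself only sketches the argument of \cite{MSS}): Chebyshev on the set where $|\bar d_{E^{h,k}}|\geq l$, a Vitali covering of the $l$-neighborhood of $\pa E^{h,k}$ by balls centered on the boundary, and the density estimates available because $E^{h,k}$ ($k\geq1$) is itself a minimizer of the previous incremental problem; whether one then uses the lower perimeter density per ball (your counting) or the relative isoperimetric inequality plus the upper perimeter density (the paper's line) is immaterial. Where you genuinely diverge is the first, $C_1\sqrt h$ bound: the paper simply cites the direct variational truncation argument of \cite[Proposition 3.2]{MSS}, whereas you derive it from the weak comparison principle, Proposition~\ref{compa prin}, by running one step of the scheme from the ball $B_d(x_0)$ with constant forcing $-(C_0+\delta)$ and checking that this one-step minimizer is a nondegenerate concentric ball when $d>C_1\sqrt h$; this is a legitimate alternative, very much in the spirit of the barrier arguments the paper uses later (Lemma~\ref{compa}), and it buys a self-contained proof at the price of a few points you should make explicit: (a) the reduction by complementation requires the remark that the comparison of the functionals \eqref{min mov} for a set and a competitor only involves their (bounded) symmetric difference, since $\int_{E^c}\bar d$ is not finite — alternatively, the inward direction can be handled directly by Proposition~\ref{compa prin}(i) with a shrinking ball inside $E^{h,k}$; (b) what you really invoke is the one-step version of the comparison principle started at step $k$ (which is what its proof gives), not the statement verbatim from time $0$; and (c) since the scheme may select any minimizer, you should note that strict radial monotonicity of the potential $(|x-x_0|-d)/h+C_0+\delta$ forces \emph{every} minimizer of the one-step problem from $B_d(x_0)$ to be the concentric ball, so the contradiction is selection-independent. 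Finally, your discussion of the scale upgrade (using the $\sqrt h$ bound to get $(\Lambda,r_0)$-minimality with $\Lambda\sim1/\sqrt h$ at scales $\sim\sqrt h$, which is what makes the density estimates usable for all $l<c_1\sqrt h$ rather than only $l\lesssim h$) is exactly the content compressed in the paper's citation of \cite[Corollary 3.3]{MSS}, and it correctly explains why the second inequality is stated only for $k\geq1$.
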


\begin{proof}
The first claim follows from the argument of the  proof  of  \cite[Proposition 3.2]{MSS} and thus we omit it. The second claim follows from an argument similar to \cite[Proposition 3.4]{MSS} and we only sketch it. 
We write
\[
|E^{h,k+1} \Delta E^{h,k} |  =  | \{x \in E^{h,k+1} \Delta E^{h,k}  : |\bar d_{E^{h,k}}(x)|\geq l\}|  + |\{x \in E^{h,k+1} \Delta E^{h,k}  : |\bar d_{E^{h,k}}(x)| < l\}|.
\]
We estimate the first term as
\[
| \{x \in E^{h,k+1} \Delta E^{h,k}  : |\bar d_{E^{h,k}}(x)|\geq l\} | \leq \frac{1}{l} \int_{E^{h,k+1} \Delta E^{h,k}} |\bar d_{E^{h,k}}(x)| \, dx.
\]

For the second term we use  Vitali covering theorem to choose a finite family of disjoint balls $(B_l(x_i))_{i=1}^N$, with $x_i \in \partial E^{h,k}$, such that 
\[
\{x \in  \R^n : |\bar d_{E^{h,k}}(x)| < l\} \subset \cup_{i=1}^N B_{5l}(x_i).
\]
Since $E^{h,k}$ is a minimizer of $\mathcal{F}_k(E; E^{h,k-1})$, we have  the density estimates   \cite[Corollary 3.3]{MSS}. Thus by the relative isoperimetric inequality we have for every $i = 1, \dots, N$
\[
|B_l(x_i)| \leq C (\Ha^{n-1}(\pa E^{h,k} \cap B_l(x_i))^{\frac{n}{n-1}}\leq C l \, \Ha^{n-1}(\pa E^{h,k} \cap B_l(x_i)).
\]
Therefore
\[
\begin{split}
|\{x \in E^{h,k+1} \Delta E^{h,k}  : |\bar d_{E^{h,k}}(x)| < l\}| &\leq  \sum_{i=1}^N |B_{5l}(x_i)| \leq 5^n \sum_{i=1}^N |B_{l}(x_i)| \\
&\leq Cl \sum_{i=1}^N \Ha^{n-1}(\pa E^{h,k} \cap B_l(x_i)) \leq C l \, P(E^{h,k} ).
\end{split}
\]
\end{proof}

In the next proposition we list useful properties of the flow in the case when the forcing term satisfies only \eqref{f1}.
\begin{proposition}
\label{MSS}
Let  $(E_{t}^h)_t$ be an approximate flat flow starting from a bounded set of  finite perimeter $E_0$  and assume  that the forcing term satisfies \eqref{f1}.  Then the following hold:
\begin{itemize}
\item[(i)] For every $T>0$ there is $R_T>0$ such that $E_t^h \subset B_{R_T}$ for every $t \leq T$. 
\item[(ii)] There is $C_3$, depending only on $E_0$ and $f$,  such that for every $T >0$ it holds  
\[
P(E_T^h) \leq C_3^{1+T}
\]
for $h$ sufficiently small.
\item[(iii)]  For every $h < s <t <T$ with $t-s >h$ and $h$ sufficiently small, it holds $|E_t^h \Delta E_s^h| \leq C_T \sqrt{t-s}$, where the constant $C_T$ depends on $T$.
\item[(iv)]  There exists a subsequence $(h_l)_l$ converging to zero such that $(E_t^{h_l})_t$ converges to a flat flow $(E_t)_t$ in $L^1$ in space and locally uniformly in time, i.e., for every $T$ 
\[
\sup_{h_l< t \leq T }|E_t^{h_l} \Delta E_t| \to 0 \qquad \text{as } \, h_l \to 0.
\]
\end{itemize}
\end{proposition}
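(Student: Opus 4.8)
Here is how I would proceed.

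\medskip

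The plan is to derive all four claims from three ingredients already at hand: the one-step minimality of $E^{h,k+1}$ for the functional $\mathcal{F}_k(\cdot\,;E^{h,k})$, Lemma~\ref{distance}, and the comparison principle of Proposition~\ref{compa prin}. For \emph{(i)} I would compare $(E_t^h)_t$ with the approximate flow $(G_t^h)_t$ issuing from a large ball $B_R\supset E_0$ with constant forcing $C_0+1$ (with $R$ chosen large enough — in addition to $B_R\supset E_0$ — that the ball stays in the expanding regime of the limiting ODE, so its radius does not vanish on $[0,T]$). Since $\bar d_{B_\rho}(x)=|x|-\rho$ is radially monotone, Schwarz symmetrization shows one may take each minimizer in the scheme for $B_R$ to be a concentric ball $B_{\rho_k}$, and \eqref{euler} becomes $\rho_{k+1}-\rho_k=h\big(C_0+1-\tfrac{n-1}{\rho_{k+1}}\big)\le h(C_0+1)$; hence $\rho_k\le R+(C_0+1)T$ whenever $kh\le T$, so $G_t^h\subset B_{R_T}$ for $t\le T$ with $R_T:=R+(C_0+1)T$, uniformly in $h$. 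Since $C_0+1>f$ by \eqref{f1}, Proposition~\ref{compa prin}(i) then forces $E_t^h\subset G_t^h\subset B_{R_T}$.

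\medskip

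The technical core of (ii)--(iv) is a dissipation inequality. Testing the minimality of $E^{h,k+1}$ against the competitor $E^{h,k}$, and using the identity $\int_{E^{h,k+1}}\bar d_{E^{h,k}}-\int_{E^{h,k}}\bar d_{E^{h,k}}=\int_{E^{h,k+1}\Delta E^{h,k}}|\bar d_{E^{h,k}}|\,dx$ together with $\big|\,|E^{h,k+1}|-|E^{h,k}|\,\big|\le|E^{h,k+1}\Delta E^{h,k}|$, one obtains
\[
P(E^{h,k+1})+\frac1h\,D_k\ \le\ P(E^{h,k})+C_0\,|E^{h,k+1}\Delta E^{h,k}|,\qquad D_k:=\int_{E^{h,k+1}\Delta E^{h,k}}|\bar d_{E^{h,k}}|\,dx.
\]
I would bound the last term by Lemma~\ref{distance} with the choice $l=2C_0C_2h$ (which lies in $(0,c_1\sqrt h)$ once $h$ is small), absorb half of $D_k/h$ into the left-hand side, and reach $P(E^{h,k+1})+\tfrac1{2h}D_k\le(1+Ch)P(E^{h,k})$. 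Dropping $D_k$ and iterating gives $P(E^{h,k})\le P(E_0)e^{Ckh}$, which is \emph{(ii)} with $C_3:=\max\{e^{C},P(E_0),1\}$; retaining $D_k$ and summing over the steps up to a fixed time $T$ gives $\sum_{kh\le T}D_k\le C_T\,h$.

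\medskip

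For \emph{(iii)}, write $E_s^h=E^{h,j}$ and $E_t^h=E^{h,k}$ with $k-j\le 2(t-s)/h$ (using $t-s>h$); then $|E_t^h\Delta E_s^h|\le\sum_{i=j}^{k-1}|E^{h,i+1}\Delta E^{h,i}|$, and Lemma~\ref{distance} applied with a still-free parameter $l\in(0,c_1\sqrt h)$, combined with (ii) and the bound on $\sum_{kh\le T}D_k$, yields $|E_t^h\Delta E_s^h|\le C_T\big(l(t-s)/h+h/l\big)$. I would then optimize over $l$: the balancing choice $l\sim h/\sqrt{t-s}$ when it is admissible, and $l\sim c_1\sqrt h$ in the remaining range (in which case $t-s$ is comparable to $h$, so the bound is still $O(\sqrt{t-s})$); together with the trivial estimate $|E_t^h\Delta E_s^h|\le|E_t^h|+|E_s^h|\le CR_T^n$ from (i) for $t-s$ large, this gives $|E_t^h\Delta E_s^h|\le C_T\sqrt{t-s}$. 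I expect this constrained optimization — reconciling the admissible window $l<c_1\sqrt h$ with the natural scaling $l\sim h/\sqrt{t-s}$ — to be the only genuinely delicate point; it reproduces in essence the estimate of \cite[Proposition~3.4]{MSS}.

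\medskip

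Finally \emph{(iv)} is a compactness argument. By (i)--(ii), for each fixed $t$ the family $\{E_t^h\}_h$ is relatively compact in $L^1(B_{R_T})$ by $BV$-compactness; by (iii), together with the $O(\sqrt h)$ bound for $|t-s|\le h$ (which comes from the one-step version of Lemma~\ref{distance} and the bound $D_k\le\sum_{ih\le T}D_i\le C_Th$), the maps $t\mapsto E_t^h$ are uniformly $\tfrac12$-H\"older into $L^1$ on $[0,T]$ for $h$ small. An Arzel\`{a}--Ascoli argument in the complete metric space of measurable subsets of $B_{R_T}$ endowed with the symmetric-difference metric then produces a subsequence converging in $L^1$ uniformly on $[0,T]$, and a diagonal extraction over $T=1,2,\dots$ gives a single sequence $h_l\to0$ with $\sup_{h_l<t\le T}|E_t^{h_l}\Delta E_t|\to0$ for every $T$; the limit $(E_t)_t$ is a flat flow by definition.
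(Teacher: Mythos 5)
Your proposal follows essentially the same route as the paper in all four parts: for (i), comparison (via Proposition~\ref{compa prin}) with an approximate flow of concentric balls issuing from a large ball with constant forcing $C_0+1$; for (ii), testing the minimality of $E^{h,k+1}$ against $E^{h,k}$ and absorbing the dissipation through the second estimate of Lemma~\ref{distance} to reach $P(E^{h,k+1})+\tfrac1{2h}D_k\le(1+Ch)P(E^{h,k})$; for (iii), the same decomposition into single steps with $l\sim h/\sqrt{t-s}$ (the paper simply takes $l=c_1 h/(2\sqrt{t-s})$, which is always admissible because $t-s>h$, so your case distinction can be avoided); and for (iv), the standard BV-compactness plus approximate equicontinuity and diagonal extraction, which the paper delegates to \cite{MSS}.

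One small point to correct in (ii): the second estimate of Lemma~\ref{distance} is stated only for $k\ge1$, since it relies on the density estimates available for minimizers of the incremental problem, and the initial set $E_0$ is an arbitrary bounded set of finite perimeter which need not satisfy them. Your iteration $P(E^{h,k})\le P(E_0)e^{Ckh}$ implicitly applies the refined one-step inequality also at $k=0$. The paper iterates only from $k=1$ and controls the first step separately: by \eqref{prop MSS 3} together with the volume bound from (i) one gets $P(E^{h,1})\le P(E_0)+C$, which then gives $P(E_T^h)\le C_3^{1+T}$. With this one-line adjustment your argument is complete and coincides with the paper's.
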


\begin{proof}
The claim (i)  follows by applying   Proposition \ref{compa prin} to $E_t^h$ and $F_t^h$, where the latter is approximate flat flow starting from $B_R$, such that $E_0 \subset B_R$, and with constant forcing term $f_2 \equiv \sup_t f(t) +1$. Then $E_t^h \subset F_t^h$.  It is easy to check that the sets $(F_t^h)_{t \leq T}$ are  balls whose radii satisfy  $r(t) \leq C(1+T)$ for $t \leq T$.  

Let us prove  (ii). By the minimality of $E^{h,k+1}$ we have $\mathcal{F}_{k}( E^{h,k+1}; E^{h,k}) \leq \mathcal{F}_{k}( E^{h,k}; E^{h,k})$ which implies
\[
P(E^{h,k+1}) + \frac{1}{h} \int_{E^{h,k+1}} \bar{d}_{E^{h,k}} \, dx  - \bar f(kh) |E^{h,k+1}| \leq P(E^{h,k}) + \frac{1}{h} \int_{E^{h,k}} \bar{d}_{E^{h,k}} \, dx  - \bar f(kh) |E^{h,k}|.
\]
We write this as
\beq \label{prop MSS 1}
\frac{1}{h} \int_{E^{h,k+1} \Delta E^{h,k}} |\bar{d}_{E^{h,k}}| \, dx  + P(E^{h,k+1})  \leq P(E^{h,k})  + \bar f(kh) (|E^{h,k+1}| - |E^{h,k}|).
\eeq
By \eqref{f1} we simply estimate $\bar f(kh) (|E^{h,k+1}| - |E^{h,k}|) \leq C_0 |E^{h,k+1} \Delta E^{h,k}|$. Then we use the second statement in Lemma \ref{distance} with $l = \hat C h$, where $\hat C$ is a large constant to deduce
\[
|E^{h,k+1} \Delta E^{h,k}| \leq C h \, P(E^{h,k}) + \frac{1}{2C_0 h} \int_{E^{h,k+1} \Delta E^{h,k}} |\bar{d}_{E^{h,k}}| \, dx.
\]
Therefore we deduce from these two inequalities and from \eqref{prop MSS 1} that 
\beq \label{prop MSS 2}
\frac{1}{2h} \int_{E^{h,k+1} \Delta E^{h,k}} |\bar{d}_{E^{h,k}}| \, dx + P(E^{h,k+1})  \leq (1+ C \, h ) P(E^{h,k}).
\eeq
By iterating the inequality $P(E^{h,k+1})  \leq (1+ C \, h ) P(E^{h,k})$ we get
\[
P(E^{h,k}) \leq (1+ C \, h )^{k-1} P(E^{h,1}) =  \left((1+ C \, h )^{1/h}\right)^{(k-1)h} P(E^{h,1}) \leq C^{(k-1)h}\,  P(E^{h,1}).
\]
Finally we  use \eqref{prop MSS 1} for $k=0$ and have
\beq \label{prop MSS 3}
P(E^{h,1})  \leq P(E_{0})  + \bar f(h) (|E^{h,1}| - |E_{0}|).
\eeq
By  (i)  we may estimate $|E^{h,1}| \leq |B_{2R}|$ for $h$ sufficiently small, where we recall that $B_R$ is the ball containing $E_0$. Therefore $ P(E^{h,1}) \leq P(E_0) + C$ and we obtain the claim (ii)

The claim (iii) follows from argument similar to \cite[Proposition 3.5]{MSS} so we only point out the main differences.  Let $k, m$ be such that   $s \in (kh, (k+1)h]$ and $t \in ((k+m)h, (k+m+1)h]$. Note that $mh \leq 2(t-s)$.  We may estimate the quantity $|E_t^h \Delta E_s^h| $ by applying the second statement of Lemma \ref{distance} with $l = c_1\frac{h}{2\sqrt{t-s}}$,  \eqref{prop MSS 2} and the part (ii) to get 
\[
\begin{split}
|E_t^h \Delta E_s^h| &\leq \sum_{i=1}^m |E^{h,k+i+1} \Delta E^{h,k+i}|\\
&\leq\sum_{i=1}^m  C \left( \frac{h}{\sqrt{t-s}} P( E^{h,k+i}) + \frac{\sqrt{t-s}}{h}\int_{E^{h,k+i+1} \Delta E^{h,k+i}} |\bar d_{E^{h,k+i}}(x)| \, dx \right)\\
&\leq \sum_{i=1}^m  C \left( \frac{h}{\sqrt{t-s}}P( E^{h,k+i})  + \sqrt{t-s}\big((1+ C \, h ) P( E^{h,k+i}) - P( E^{h,k+i+1}) \big)  \right)\\
&\leq  C\sqrt{t-s} \, \sup_{t\leq T}P(E_t^h) +  \sqrt{t-s} P(E^{h,k+1}) \leq C_T \sqrt{t-s}.
\end{split}
\]

Similarly (iv) follows from the proof of \cite[Theorem 2.2]{MSS}.
\end{proof}

When in addition we assume that the forcing term satisfies  \eqref{f2} we obtain  estimates which are more uniform with respect to time. To this aim we define the following quantity which plays the role of the energy
\beq
\label{energy}
\mathcal{E}(E)  := P(E) -c_0 |E|,
\eeq
where $c_0$ is the constant appearing in \eqref{f2}.
\begin{proposition}
\label{MSS2}
Let  $(E_{t}^h)_t$ be an approximate flat flow starting from a bounded set of  finite perimeter $E_0$  and assume  that the forcing term satisfies \eqref{f1} and \eqref{f2}.  Then, if $h$ is sufficiently small, the following hold:
\begin{itemize}
\item[(i)] For every $\eps>0$ there is $T_\eps$ such that for every  $T_\eps < T_1 < T_2$, with $T_2 \geq T_1+h$, we have the following dissipation inequality  
\[
c \int_{T_1}^{T_2}\int_{\pa E_t^h} (H_{E_t^h} - \bar f(t-h))^2 \, d\Ha^{n-1}dt  + \mathcal{E}(E_{T_2}^h )\leq  \mathcal{E}(E_{T_1-h}^h) + \eps \sup_{T_1-h \leq t \leq T_2} P(E_{t}^h).
\]
\item[(ii)] If $\sup_{t \geq 0} | E_t^h| < \infty$, then $\sup_{t \geq 0} P(E_t^h) < \infty$.
\item[(iii)]  If $\sup_{t \geq 0} | E_t^h| < \infty$, there exists a constant $C_4$ such that  $|E_t^h \Delta E_s^h| \leq C_4 \sqrt{t-s}$ for every $h < s <t$ with $t-s >h$.

\end{itemize}
\end{proposition}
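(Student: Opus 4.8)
The plan is to derive all three assertions from a single \emph{per-step dissipation inequality}. Subtracting $c_0|E^{h,k+1}|$ from the left-hand side and $c_0|E^{h,k}|$ from the right-hand side of \eqref{prop MSS 1} turns it into
\[
\mathcal E(E^{h,k+1})+\frac1h\int_{E^{h,k+1}\Delta E^{h,k}}|\bar d_{E^{h,k}}|\,dx\le\mathcal E(E^{h,k})+(\bar f(kh)-c_0)\bigl(|E^{h,k+1}|-|E^{h,k}|\bigr).
\]
To absorb the error term I would estimate $\bigl|\,|E^{h,k+1}|-|E^{h,k}|\,\bigr|\le|E^{h,k+1}\Delta E^{h,k}|$ and apply the second estimate of Lemma~\ref{distance} with $l\simeq h\,|\bar f(kh)-c_0|$, an admissible choice once $h$ is small (by \eqref{f1} the factor $|\bar f(kh)-c_0|$ is bounded); this puts half of $\tfrac1{2h}\int|\bar d_{E^{h,k}}|$ back on the left and leaves a remainder $\le C\,h\,|\bar f(kh)-c_0|^2\,P(E^{h,k})$. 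Then, using the Euler--Lagrange equation \eqref{euler}, i.e.\ $\bar d_{E^{h,k}}=h\bigl(\bar f(kh)-H_{E^{h,k+1}}\bigr)$ on $\partial E^{h,k+1}\setminus\Sigma$, together with the geometric lower bound $\int_{E^{h,k+1}\Delta E^{h,k}}|\bar d_{E^{h,k}}|\,dx\ge c\int_{\partial E^{h,k+1}}|\bar d_{E^{h,k}}|^2\,d\Ha^{n-1}$ (which follows from the $\Lambda$-minimality of $E^{h,k+1}$, cf.\ \cite{MSS}), I arrive at
\[
c\,h\int_{\partial E^{h,k+1}}\bigl(H_{E^{h,k+1}}-\bar f(kh)\bigr)^2\,d\Ha^{n-1}+\mathcal E(E^{h,k+1})\le\mathcal E(E^{h,k})+C\,h\,|\bar f(kh)-c_0|^2\,P(E^{h,k}).
\]

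Assertion (i) will follow by summing this over those $k$ with $T_1\le kh\le T_2$: since $E_t^h=E^{h,k+1}$ and $\bar f(t-h)=\bar f(kh)$ for $t\in(kh,(k+1)h]$, the left side telescopes into the claimed space--time integral plus $\mathcal E(E_{T_2}^h)$, while the remainder is at most $\bigl(\sup_{T_1-h\le t\le T_2}P(E_t^h)\bigr)\,C\sum_k h\,|\bar f(kh)-c_0|^2$. Jensen's inequality for the averages defining $\bar f$ gives $\sum_k h\,|\bar f(kh)-c_0|^2\le\int_{T_1-h}^\infty|f(s)-c_0|^2\,ds$, so choosing $T_\eps$ large enough (depending only on $f$ and $\eps$) that this tail is $<\eps/C$ completes (i).

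For (ii) I would apply (i) with $\eps=\tfrac12$, discard the nonnegative dissipation term, and use $\mathcal E(E_{T_2}^h)\ge P(E_{T_2}^h)-c_0 M$ and $\mathcal E(E_{T_1-h}^h)\le P(E_{T_1-h}^h)$, where $M:=\sup_t|E_t^h|$ and where $c_0>0$ is essential; fixing $T_1$ just above $T_{1/2}$ then gives, for all large $T$,
\[
P(E_{T}^h)\le c_0 M+P(E_{T_1-h}^h)+\tfrac12\sup_{T_1-h\le t\le T}P(E_t^h).
\]
Since $P(E_t^h)$ is finite and bounded independently of $h$ on the fixed initial interval $[0,T_1]$ by Proposition~\ref{MSS}(ii), taking the supremum over $T$ absorbs the last term and yields $\sup_{t\ge0}P(E_t^h)\le C$ with $C$ independent of $h$ and of $t$. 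Assertion (iii) is then the estimate of Proposition~\ref{MSS}(iii) made uniform in time: for $s\in(kh,(k+1)h]$, $t\in((k+m)h,(k+m+1)h]$ with $mh\le2(t-s)$, applying Lemma~\ref{distance} with $l=c_1 h/(2\sqrt{t-s})$ gives $|E_t^h\Delta E_s^h|\le\sum_{i=1}^m|E^{h,k+i+1}\Delta E^{h,k+i}|\le C\sqrt{t-s}\,\sup_jP(E^{h,j})+\frac{C\sqrt{t-s}}{h}\sum_{i=1}^m\int_{E^{h,k+i+1}\Delta E^{h,k+i}}|\bar d_{E^{h,k+i}}|\,dx$; the first term is controlled by (ii), and summing the per-step inequality above over $i$ --- telescoping $\mathcal E$ and bounding the accumulated remainder by $C\sup_jP(E^{h,j})\int_0^\infty|f-c_0|^2\,ds$ via (ii) and \eqref{f2} --- shows $\frac1h\sum_{i=1}^m\int|\bar d_{E^{h,k+i}}|\le C$ with $C$ independent of $k,m,h$, whence $|E_t^h\Delta E_s^h|\le C_4\sqrt{t-s}$.

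The only genuinely nontrivial ingredient --- which I would borrow from \cite{MSS} (see also \cite{CMP}) --- is what enables the passage from bare minimality to the clean dissipation inequality: the $C^{1,\alpha}$ regularity of $\partial E^{h,k+1}$ (indeed $C^{2,\alpha}$ away from the small singular set $\Sigma$), which legitimizes \eqref{euler}, and the geometric lower bound of $\int_{E^{h,k+1}\Delta E^{h,k}}|\bar d_{E^{h,k}}|\,dx$ in terms of $\int_{\partial E^{h,k+1}}|\bar d_{E^{h,k}}|^2\,d\Ha^{n-1}$. Once these are available, everything else is bookkeeping with Lemma~\ref{distance}, Jensen's inequality and \eqref{f2}.
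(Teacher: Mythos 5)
Your argument is correct and follows essentially the same route as the paper's proof: the per-step inequality from minimality combined with Lemma \ref{distance} applied with $l\sim h\,|\bar f(kh)-c_0|$, the Euler--Lagrange equation \eqref{euler} plus the lower bound from \cite{MSS} for the dissipation term, Jensen and the tail of \eqref{f2} for (i), the $\eps<1/2$ absorption together with Proposition \ref{MSS}(ii) for (ii), and the scheme of Proposition \ref{MSS}(iii) made uniform in time for (iii). The only cosmetic difference is that in (iii) you re-derive the bound on $\tfrac1h\sum_i\int_{E^{h,k+i+1}\Delta E^{h,k+i}}|\bar d_{E^{h,k+i}}|\,dx$ from your per-step dissipation inequality, whereas the paper simply reuses \eqref{prop MSS 2}; the two are equivalent bookkeeping.
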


\begin{proof}
To prove (i) we begin with \eqref{prop MSS 1}. This time we estimate the last term in  \eqref{prop MSS 1} as
\[
\bar f(kh) (|E^{h,k+1}| - |E^{h,k}|) \leq  c_0(|E^{h,k+1}| - |E^{h,k}|) +  |\bar f(kh) -c_0| \, |E^{h,k+1} \Delta E^{h,k}|.
\]
We use the second estimate in Lemma \ref{distance} with $l = \hat C\, |\bar f(kh) -c_0| h$, where $\hat C$  is a large constant and $h$ is sufficiently small, to deduce
\[
 |\bar f(kh) -c_0| \, |E^{h,k+1} \Delta E^{h,k}| \leq C |\bar f(kh) -c_0|^2 h \, P(E^{h,k}) + \frac{1}{2 h} \int_{E^{h,k+1} \Delta E^{h,k}} |\bar{d}_{E^{h,k}}| \, dx.
\]
Therefore we have by \eqref{prop MSS 1} that 
\[
\frac{1}{2 h} \int_{E^{h,k+1} \Delta E^{h,k}} |\bar{d}_{E^{h,k}}| \, dx + \mathcal{E}(E^{h,k+1} )\leq  \mathcal{E}(E^{h,k}) + C |\bar f(kh) -c_0|^2 h \, P(E^{h,k}),
\]
where $\mathcal{E}$ is defined in \eqref{energy}. 

Let us fix $\eps>0$. Since we assume \eqref{f2}, there exists $T_\eps$ such that 
\beq \label{prop MSS2 1}
 \int_{T_\eps}^\infty (f(t) -c_0)^2\, dt \leq \frac{\eps}{C},
\eeq
 where $C$ is a constant to be chosen later. Let $T_2 > T_1 > T_\eps$  and 
let $j, m$ be such that   $T_1 \in (jh, (j+1)h]$ and $T_2 \in ((j+m)h, (j+m+1)h]$. We iterate the previous inequality from $k= j-1$ to $k = j+m-1$ and obtain 
\beq \label{prop MSS2 2}
\begin{split}
\sum_{k=j}^{j+m} \frac{1}{2 h} \int_{E^{h,k+1} \Delta E^{h,k}} &|\bar{d}_{E^{h,k}}| \, dx + \mathcal{E}(E_{T_2}^h )  \\
&\leq \mathcal{E}(E_{T_1-h}^h) +C\left(  \sup_{T_1-h \leq t \leq T_2} P(E_{t}^h) \right)  \left( \int_{T_1-h}^{T_2} |\bar f(t) -c_0|^2 \,dt  \right) \\
&\leq \mathcal{E}(E_{T_1-h}^h) +C\left(  \sup_{T_1-h \leq t \leq T_2} P(E_{t}^h) \right)  \left( \int_{T_\eps}^{\infty} |f(t) -c_0|^2 \,dt  \right) \\
&\leq \mathcal{E}(E_{T_1-h}^h) + \eps \left(  \sup_{T_1-h \leq t \leq T_2} P(E_{t}^h) \right) ,
\end{split}
\eeq
where the last inequality follows from \eqref{prop MSS2 1}.

Arguing as in the proof of \cite[Lemma 3.6]{MSS}, we deduce that there is a constant $c>0$, depending only on the dimension,  such that  
\[
c\, h \int_{\pa E^{h,k+1}}  \left( \frac{\bar{d}_{E^{h,k}}}{h}\right)^2 \, d \Ha^{n-1} \leq  \int_{E^{h,k+1} \Delta E^{h,k}} \frac{|\bar{d}_{E^{h,k}}|}{h} \, dx.
\]
Therefore by the Euler-Lagrange equation \eqref{euler} we have
\[
\begin{split}
\sum_{k=j}^{j+m} \frac{1}{h} \int_{E^{h,k+1} \Delta E^{h,k}} |\bar{d}_{E^{h,k}}| \, dx &\geq c \sum_{k=j}^{j+m} h \int_{\pa E^{h,k+1}}  \left( \frac{\bar{d}_{E^{h,k}}}{h}\right)^2 \, d \Ha^{n-1}\\
&= c \sum_{k=j}^{j+m} h \int_{\pa E^{h,k+1}}  \left( H_{E^{h,k+1}} - \bar f(kh)\right)^2 \, d \Ha^{n-1}\\
&\geq c \int_{T_1}^{T_2}  \int_{\pa E_{t}^h}  \left( H_{E_{t}^h} - \bar f(t-h)\right)^2 \, d \Ha^{n-1} dt . 
\end{split}
\]
Thus we have the claim (i) by \eqref{prop MSS2 2}.

To show (ii) we fix $0 <\eps < 1/2 $,  $T > T_\eps$ and apply  the  part (i) with  $T_1 = T_\eps+h$ and $T_1+h<T_2 = t \leq T$ to deduce 
\[
\mathcal{E}(E_{t}^h )\leq  \mathcal{E}(E_{T_\eps}^h) + \eps \sup_{T_\eps \leq s \leq T} P(E_{s}^h).
\]
We recall that $\mathcal{E}(E) = P(E) - c_0 |E|$ and that we assume $\sup_{t> 0}|E_t^h| <\infty$. Therefore from the above inequality, recalling that $P(E_t^h)\leq C_\eps$ for all $t<T_\eps+1$ by   Proposition~\ref{MSS} (ii), we get 
\[
P(E_{t}^h ) \leq C_\eps + c_0\sup_{t> 0}|E_t^h|+\eps \sup_{T_\eps \leq s \leq T} P(E_{s}^h)
\]
for every $T_\eps< t\leq T$. Thus, since $\eps < 1/2$ we deduce  that
\[
\sup_{T_\eps \leq t \leq T} P(E_{t}^h) \leq 2\big(C_\eps + c_0\sup_{t> 0}|E_t^h|\big). 
\]
The claim (ii) follows from the fact that $T$ was arbitrary. 

Finally the proof of (iii) follows from the proof of Proposition \ref{MSS} (iii), noticing that now the constant $C_T$ is in fact independent on $T$ thanks to the bound on the perimeters provided by (ii).
\end{proof}

\begin{remark}
\label{rem energy}
If $(E_t^h)_t$, $E_0$ and $f$ are as  in  Proposition \ref{MSS2},  and if we assume 
\[
\sup_{t \geq 0} | E_t^h| \leq C,
\] 
then Proposition (i) and (ii) imply that the energy $\mathcal{E}(E_t^h)$ is asymptotically almost decreasing. More precisely, for every $\eps>0$ there is $T_\eps$ such that for $t>s>T_\eps$ it holds
\beq\label{remenergy1}
\mathcal{E}(E_t^h) \leq \mathcal{E}(E_s^h) + C \eps,
\eeq
with $T_\eps$ and $C$ independent of $h$. 
This inequality implies in particular that there exists
$$
\lim_{t\to+\infty}\mathcal{E}(E_t^h) .
$$
Moreover, from the proof of  Proposition~\ref{MSS2} we have also that if  $h$ is sufficiently small and $\sup_{0<t <T} | E_t^h| \leq C$ for some $T>0$, then there exists a constant $\tilde C$, independent of $h$, such that $\sup_{0<t <T} P(E^h_t) \leq\tilde C$.
\end{remark}

\section{Stationary sets and proof of Theorem \ref{thm1}}

In this section we go back to the two dimensional setting. We study critical sets of the isoperimetric problem and  stationary sets for the flow \eqref{flow}. A set of finite perimeter $E$ is \emph{critical} for the isoperimetric problem if its distributional mean curvature is constant. 

We   define \emph{stationary sets} for the equation \eqref{flow} as follows.
\begin{definition}
\label{stationary}
Assume that the forcing term $f$ in  \eqref{flow} is constant, i.e., $f \equiv c_0$. A set of finite perimeter $E_0$ is \emph{stationary}  if for any  flat flow $(E_t)_t$ starting from $E_0$ it holds
\[
\sup_{0\leq t \leq T} |E_t \Delta E_0| = 0
\]  
for every $T>0$.
\end{definition}

We begin by proving the sharp  quantitative version of the Alexandrov's theorem in the plane. 

\begin{lemma}
\label{alex}
Let $M>0$ and let $E  \subset  \R^2$ be   $C^2$-regular  with $P(E) \leq M$.  There exist a constant $C_M$ and  points $x_1, x_2, \dots, x_N$, with $|x_i -x_j| >2$,   such that  for $F = \cup_{i=1}^N D_1(x_i) $ it holds
\begin{equation}\label{alex-1}
\sup_{x\in E\Delta F}d_{\pa F}(x) \leq C_M \|k_E-1 \|_{L^1(\pa E)} 
\end{equation}
and 
\begin{equation}\label{alex0}
 | P(E) - 2\pi N| \leq C_M \|k_E-1 \|_{L^1(\pa E)} .
\end{equation}
Moreover, there exists $\eps_0>0$ such that if $ \|k_E-1 \|_{L^2(\pa E)} \leq \eps_0$ then $E$ is $C^1$-diffeomorphic to the disjoint union of $N$ disks. 
\end{lemma}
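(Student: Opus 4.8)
The plan is to reduce everything to one boundary component at a time via the Umlaufsatz, and then to reconstruct each component from its curvature by integrating the arclength equation. Write $\delta:=\|k_E-1\|_{L^1(\partial E)}$. Since $E$ is $C^2$-regular, $\partial E$ is a finite disjoint union of $C^2$ Jordan curves $\Gamma_1,\dots,\Gamma_m$; by Hopf's Umlaufsatz $\int_{\Gamma_i}k_E\,d\mathcal H^1=\pm2\pi$, with the $+$ sign exactly when $\Gamma_i$ is an outer boundary (so $E$ lies on the concave side) and $-$ when $\Gamma_i$ bounds a hole of $E$. Hence for an outer component $|\mathcal H^1(\Gamma_i)-2\pi|\le\|k_E-1\|_{L^1(\Gamma_i)}$, while a hole satisfies $\|k_E-1\|_{L^1(\Gamma_i)}\ge 2\pi+\mathcal H^1(\Gamma_i)>2\pi$. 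Letting $N$ be the number of outer components and summing these inequalities over $i$ gives at once $|P(E)-2\pi N|\le\delta$, i.e.\ \eqref{alex0}. I also record that every $\Gamma_i$ has $\mathcal H^1(\Gamma_i)\le 2\pi+\delta$, so its Jordan domain $\Omega_i$ has $\operatorname{diam}\overline{\Omega_i}\le\pi+\delta/2$, and that $\overline E\subset\bigcup_{i\text{ outer}}\overline{\Omega_i}$.

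The core of the proof is the regime $\delta\le\delta_0$, for a small $\delta_0=\delta_0(M)$ to be fixed. There $\delta<2\pi$ rules out holes, and also rules out one outer component being nested inside another (that would force a hole in the annulus between them), so $\overline E=\bigsqcup_{i=1}^N\overline{\Omega_i}$ with the $\Omega_i$ pairwise disjoint. For each $i$ I would parametrize $\Gamma_i$ by arclength $\gamma_i\colon[0,L_i]\to\mathbb R^2$ with the orientation making the unit tangent turn counterclockwise by $2\pi$; then the tangent angle $\theta_i$ solves $\theta_i'=k_E\circ\gamma_i$, hence $\theta_i(s)=\theta_i(0)+s+\int_0^s(k_E\circ\gamma_i-1)$ and $\|\theta_i-\theta_i(0)-\mathrm{id}\|_\infty\le\|k_E-1\|_{L^1(\Gamma_i)}=:\delta_i$. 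Integrating $\gamma_i'=(\cos\theta_i,\sin\theta_i)$ and comparing with the arclength parametrization $\gamma_i^0$ of the unit circle $\partial D_1(c_i)$ sharing the same initial point and initial tangent (this determines $c_i$) yields $\sup_s|\gamma_i(s)-\gamma_i^0(s)|\le L_i\delta_i\le C\delta_i$. Thus $\Gamma_i$ is within Hausdorff distance $C\delta_i$ of $\partial D_1(c_i)$, and since $\Gamma_i$ encircles $c_i$ with winding number $\pm1$, a look at the annular neighbourhood $\{\,\big|\,|y-c_i|-1\,\big|<C\delta_i\,\}$ gives $D_{1-C\delta_i}(c_i)\subset\Omega_i\subset D_{1+C\delta_i}(c_i)$. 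Disjointness of the $\Omega_i$ then forces $|c_i-c_j|\ge 2-C(\delta_i+\delta_j)\ge 2-C\delta$, and a short perturbation — clustering the $c_i$ by near-tangency and dilating each cluster by a factor $1+C\delta$ — produces points $x_i$ with $|x_i-c_i|\le C_M\delta$ and $|x_i-x_j|>2$. With $F=\bigcup_{i=1}^N D_1(x_i)$ every point of $E\Delta F$ lies within $C\delta$ of some $\Gamma_i$, hence within $C_M\delta$ of $\partial F$, which is \eqref{alex-1}.

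The complementary regime $\delta\ge\delta_0$ is soft: by the first paragraph $N\le(M+\delta)/2\pi$ and $\operatorname{diam}\overline{\Omega_i}\le\pi+\delta/2$ for each outer $i$, and since $\delta$ is bounded below in terms of $M$ there is ample room to pick $x_i$ with $\operatorname{dist}(x_i,\Gamma_i)\le C_M\delta$ and $|x_i-x_j|>2$; then, using $\overline E\subset\bigcup_i\overline{\Omega_i}$, for $x\in E$ one has $d_{\partial F}(x)\le\operatorname{diam}\overline{\Omega_i}+1+C_M\delta\le C_M\delta$, and for $x\in F\setminus E$ trivially $d_{\partial F}(x)\le1\le\delta$. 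Together with the first paragraph this gives \eqref{alex-1} and \eqref{alex0} for every $C^2$-regular $E$ with $P(E)\le M$. For the last assertion I would note that $\|k_E-1\|_{L^1(\partial E)}\le\sqrt{P(E)}\,\|k_E-1\|_{L^2(\partial E)}\le\sqrt M\,\varepsilon_0$ by Cauchy--Schwarz, so a small $\varepsilon_0$ places us in the regime above: $\overline E=\bigsqcup_{i=1}^N\overline{\Omega_i}$ with each $\Gamma_i$ squeezed between $\partial D_{1\mp C\delta}(c_i)$, and the $C^0$ bound on $\theta_i$ (together with $|\gamma_i-\gamma_i^0|\le C\delta$) shows the tangent to $\Gamma_i$ is within $C\delta$ of orthogonal to the radial direction from $c_i$; hence for $\delta$ small $\Gamma_i$ is a $C^1$ (indeed $C^2$) radial graph over $\partial D_1(c_i)$ with graph function near $1$, so $\Omega_i$ is $C^1$-diffeomorphic to $D_1$ and $\overline E$ to a disjoint union of $N$ disks, with the same $N$ as in \eqref{alex0}.

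I expect the conceptual heart to be the arclength estimate of the second paragraph, which is precisely what makes the dependence on $\|k_E-1\|_{L^1}$ \emph{linear} rather than quadratic, with no isoperimetric-type input needed. The genuinely fiddly points, which I would treat as the main obstacles, are the perturbation upgrading $|c_i-c_j|\ge 2-C\delta$ to the strict inequality $|x_i-x_j|>2$, and the requirement that the estimates hold uniformly over \emph{all} $C^2$-regular sets of perimeter $\le M$: when $\|k_E-1\|_{L^1}$ is not small, holes and nested components can occur and must be disposed of by cruder counting arguments.
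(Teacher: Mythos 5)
Your argument is correct and follows essentially the same route as the paper: split into the regimes $\|k_E-1\|_{L^1}$ large (where everything holds with a crude constant $C_M$) and small, use Gauss--Bonnet/Umlaufsatz to get \eqref{alex0} and to exclude holes, and reconstruct each boundary component from the tangent angle $\theta(s)=\int_0^s k_E$ by integrating $\gamma'=(\cos\theta,\sin\theta)$ and comparing with a unit-circle parametrization, with the $L^2$-smallness giving the $C^1$-closeness needed for the diffeomorphism claim. The only differences are bookkeeping details (your explicit treatment of hole/nesting exclusion and of upgrading $|c_i-c_j|\ge 2-C\delta$ to strict separation) which the paper leaves to the reader, so the two proofs are essentially identical in substance.
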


\begin{proof}
Assume  that $\|k_E-1 \|_{L^1(\pa E)}  \geq \eps_0$ for a small $\eps_0$ to be chosen later. Since $\|k_E\|_{L^1(\pa E)}<\infty$, $E$ has finitely many connected components $E_i$, $i=1,\dots,N$. If $P(E)\geq 2\pi N$, then $|P(E)-2\pi N|\leq M$, hence \eqref{alex0} follows with a sufficiently large constant. Otherwise, using Gauss-Bonnet theorem,  
$$
2\pi N-P(E)\leq\sum_{i=1}^N\int_{\pa E_i}(|k_E|-1)\,d\Ha^1\leq \|k_E-1\|_{L^1(\pa E)},
$$
hence \eqref{alex0} follows with $C_M=1$.
Since $P(E_i)\leq M$ for every $i$, there exist points $x_i$ such that $E_i\subset D_M(x_i)$.  Therefore $\sup_{x\in E_i\Delta D_1(x_i)}d_{\pa D_1(x_i)}(x)$  is smaller than $M$. Hence $\sup_{x\in E\Delta F}d_{\pa F}(x)\leq M$ and \eqref{alex-1} holds with a sufficiently large constant.  

Assume now that   $\|k_E-1 \|_{L^1(\pa E)}  \leq \eps_0$ for a small $\eps_0$. Let us fix a component $E_i$ of $E$ and denote $l = P(E_i)$. Let us first prove that there is ${x}_i$ such that  
\beq
\label{alex 1}
\sup_{x\in E\Delta D_1(x_i)}d_{\pa D_1(x_i)}(x) \leq C \|k_E-1 \|_{L^1(\pa E)} \qquad \text{and} \qquad |l - 2 \pi  | \leq   \|k_E-1 \|_{L^1(\pa E_i)} .
\eeq 
It is not difficult to see that the  claim follows from  \eqref{alex 1}. 

We claim  first that $E_i$ is simply connected. Indeed, let $\Gamma_0$ be the outer  component of $\pa E_i$ for which it holds $\int_{\Gamma_0} k_E \, d \Ha^1 = 2 \pi$.  Then it follows from  $\|k_E-1 \|_{L^1(\pa E)}  \leq \eps_0$ that  
\[
2 \pi - \Ha^1(\Gamma_0)  = \int_{\Gamma_0} (k_E -1) \, d \Ha^1 \leq \int_{\pa E} |k_E-1| \,  d \Ha^1 \leq \eps_0.
\]
This yields $P(E_i) \geq \Ha^1(\Gamma_0)  \geq 2 \pi - \eps_0$. Then 
\[
\int_{\pa E_i} k_E \, d \Ha^1 = \int_{\pa E_i} (k_E -1) \, d \Ha^1 + P(E_i) \geq P(E_i) - \int_{\pa E} |k_E-1| \,  d \Ha^1 \geq 2 \pi -  2\eps_0.
\]
Therefore when $\eps_0 < \pi $   we conclude that $\int_{\pa E_i} k_E \, d \Ha^1 $ is positive. Since $E_i$ is connected, this implies that   it is  simply connected. 

Since the boundary $\pa E_i$ is connected we may parametrize it by unit speed curve  $\gamma : [0, l] \to \R^2$,  $\gamma(s) = (x(s),y(s))$ with counterclockwise orientation. Define $\theta(s) := \int_{0}^s k_E(\gamma(\tau)) \, d \tau $ so that 
$\theta(0) = 0$ and $\theta(l) = 2 \pi$.  Then 
\beq
\label{alex 2}
|\theta(s) - s| \leq  \|k_E-1 \|_{L^1(\pa E_i)} \qquad \text{for all }\, s \in [0,l].
\eeq 
In particular, for $s = l$ \eqref{alex 2} implies  
\[
|\theta(l) - l|= |2 \pi - l| \leq     \|k_E-1 \|_{L^1(\pa E_i)}
\]
which is the second inequality in \eqref{alex 1}. 

 By possibly rotating the set $E$ we have 
$$
x'(s) = - \sin \theta(s) \qquad \text{and} \qquad y'(s) = \cos \theta(s) .
$$
In particular, \eqref{alex 2} implies
$$
|x'(s)  +  \sin  s| \leq  \|k_E-1 \|_{L^1(\pa E_i)}  \qquad \text{and} \qquad |y'(s)  -  \cos  s| \leq  \|k_E-1 \|_{L^1(\pa E_i)} 
$$
for all $s \in [0,l]$. Therefore there are numbers $a$ and $b$ such that 
\beq
\label{alex 5}
|x(s) - a  -  \cos  s| \leq C \|k_E-1 \|_{L^1(\pa E_i)}  \qquad \text{and} \qquad |y(s) - b  -  \sin  s| \leq C \|k_E-1 \|_{L^1(\pa E_i)} 
\eeq
for all $s \in [0,l]$. Therefore we obtain from $|l - 2 \pi  | \leq   \|k_E-1 \|_{L^1(\pa E)}$ that 
\[
|x(s) - a  -  \cos (2 \pi s /l )| \leq C \|k_E-1 \|_{L^1(\pa E_i)}  \qquad \text{and} \qquad |y(s) - b  -  \sin(2 \pi s /l )| \leq C \|k_E-1 \|_{L^1(\pa E_i)} ,
\]
which gives the first inequality in \eqref{alex 1} for ${x}_i = (a,b)$.

Note that from \eqref{alex 5} it follows that if  $\|k_E - 1 \|_{L^2(\pa E)}$ is small, then $\gamma(s)$ is close in $C^{1,\alpha}(0,l)$ to the parametrization $(a+\cos (2 \pi s /l ), b+\sin(2 \pi s /l ))$ of $\pa D_1( x_i)$. Hence   $E_i$ is $C^{1,\alpha}$-close  to $D_1({x}_i)$.

\end{proof}

The following lemma is based on a comparison argument. 
\begin{lemma}
\label{compa}
Assume $E_0  \subset  \R^2$ is    $C^2$-regular set with $P(E_0) \leq M$ and let $(E_t^h)_t$ be the  approximate flat flow  starting from $E_0$.   If  $E_0$ is close to a  disjoint union of $N$ disks with radius one, i.e., there exist  $F = \cup_{i=1}^N D_1(x_i)$, with $|x_i - x_j| \geq 2$ for $i \neq j$, such that 
\[
\sup_{x \in E^h_t \Delta  F}d_{\pa F}(x) \leq \delta,
\]
then  for $\delta>0$ small enough it holds
\[
\sup_{x \in E^h_t\Delta  F} d_{\pa F}(x) \leq  5\delta^{1/4} \qquad \text{for all }\, t \in (0,\sqrt{\delta})
\]
for all  $h >0$ small. 
\end{lemma}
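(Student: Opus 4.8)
The plan is to sandwich the flow $(E_t^h)_t$ between explicit disk barriers and apply the weak comparison principle, Proposition~\ref{compa prin}, exploiting the fact that the approximate flat flow — run with the same time step $h$ — of a disk under a constant forcing term is again a concentric disk whose radius is governed by an elementary discrete version of $\dot\rho=-1/\rho+g$. (The $C^2$ regularity and the perimeter bound on $E_0$ play no role here beyond guaranteeing that $E_0$ is a bounded set of finite perimeter, so that the flow and Proposition~\ref{compa prin} are available.) First I would rewrite the closeness hypothesis set-theoretically: since $|x_i-x_j|\ge 2$ one has $\pa D_1(x_i)\subset\pa F$ for every $i$, and directly from the definition of $d_{\pa F}$ this gives
\[
\bigcup_{i=1}^N D_{1-\delta}(x_i)\ \subset\ E_0\ \subset\ \bigcup_{i=1}^N D_{1+\delta}(x_i)=\{x\in\R^2:\ d_F(x)\le\delta\}.
\]
Indeed, if $|w-x_i|<1-\delta$ then every $z\notin F$ satisfies $|z-w|\ge|z-x_i|-|w-x_i|>\delta$, so $d_{\pa F}(w)>\delta$ and hence $w\in E_0$ by hypothesis; the inclusion $E_0\subset\{d_F\le\delta\}$ is immediate, since a point of $E_0\setminus F$ lies within $\delta$ of $\pa F$.

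For the interior bound I would fix $i$ and flow $D_{1-\delta}(x_i)$, with step $h$, under the constant forcing $-C_0-1$. By Schwarz symmetrisation the minimiser in \eqref{min mov} at each step is a concentric disk $D_{\rho_i(kh)}(x_i)$, and computing the radius recursion explicitly (as in the proof of Proposition~\ref{MSS}(i)) yields, as long as the disk is non-empty, $\rho_i(t)^2\ge(1-\delta)^2-C(C_0)\,t$; in particular $\rho_i(t)\ge 1-C(C_0)\sqrt\delta$ for $t\le\sqrt\delta$, provided $\delta$ and $h$ are small. Since $D_{1-\delta}(x_i)\subset E_0$ and $-C_0-1<f$, Proposition~\ref{compa prin}(i) gives $D_{\rho_i(t)}(x_i)\subset E_t^h$ for all $t$. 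Hence, if $x\in F\setminus E_t^h$ and $x\in D_1(x_i)$, then $|x-x_i|\ge 1-C(C_0)\sqrt\delta$, so $d_{\pa F}(x)\le 1-|x-x_i|\le C(C_0)\sqrt\delta\le 5\delta^{1/4}$ once $\delta$ is small.

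For the exterior bound I would argue by contradiction. Suppose $x\in E_t^h\setminus F$ with $0<t<\sqrt\delta$ and $r_0:=d_F(x)>5\delta^{1/4}$. From $E_0\subset\{d_F\le\delta\}$ the disk $D_{\rho_0}(x)$ with $\rho_0:=\dist(x,E_0)\ge r_0-\delta>4\delta^{1/4}$ is disjoint from $E_0$. Flowing it with step $h$ under the constant forcing $-C_0-1$, it stays a concentric disk $D_{\sigma(t)}(x)$, and the same radius estimate shows $\sigma(t)>0$ for all $t\le\sqrt\delta$ (its initial radius being of order $\delta^{1/4}$, it does not vanish on the time scale $\sqrt\delta$, once $\delta$ is small). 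Since $E_0\subset\R^2\setminus D_{\rho_0}(x)$ and $-(-C_0-1)=C_0+1>f$, Proposition~\ref{compa prin}(ii) gives $E_t^h\subset\R^2\setminus D_{\sigma(t)}(x)$, whence $x\notin E_t^h$, a contradiction. Therefore $d_{\pa F}(x)\le 5\delta^{1/4}$ for every $x\in E_t^h\setminus F$, and together with the interior bound this proves the lemma.

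The main obstacle, I expect, is the uniform-in-$h$ bookkeeping for the barrier disks: one has to extract from the Euler--Lagrange relation \eqref{euler}, in the elementary case of a disk, that a disk of radius of order $\delta^{1/4}$ persists for a time of order $\sqrt\delta$ and loses at most $O(\sqrt\delta)$ of its radius, with constants independent of $h$, so that the conclusion is genuinely uniform as $h\to 0$ (and so that $\delta$, and then $h$, may be chosen small in this order). The distance-function estimates for $F$ a union of possibly tangent unit disks are elementary but must be carried out keeping the tangency in mind, in particular the inclusion $\pa D_1(x_i)\subset\pa F$, which is where $|x_i-x_j|\ge 2$ is used.
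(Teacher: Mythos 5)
Your proposal is correct and follows essentially the same route as the paper: inner barriers given by slightly shrunk unit disks and outer barriers given by small disks of radius of order $\delta^{1/4}$ placed outside the flow, both evolved under the constant forcing $-(C_0+1)$ with the same discrete radius bookkeeping from \eqref{euler}, and compared to $(E_t^h)_t$ via Proposition~\ref{compa prin}. The only (harmless, in fact slightly cleaner) deviation is that you center the outer barrier at the alleged bad point $x\in E_t^h\setminus F$ to get a direct contradiction, whereas the paper places such barriers along the level set $\Gamma=\{x\notin F:\ d_{\pa F}(x)=5\delta^{1/4}\}$ and concludes the flow never reaches $\Gamma$.
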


\begin{proof}
Let $F$ be the union of disks as in the assumption and define 
\[
F_- := \{ x \in F :   d_{\R^2 \setminus F}(x) > \delta^{1/4}\} \qquad \text{and} \qquad F_+ := \{ x \in \R^2 : d_F(x) < \sqrt{\delta} \} . 
\]
Then clearly $F_- \subset F \subset F_+$ and by the assumption  $\sup_{x \in E_0 \Delta  F} d_{\pa F}(x) \leq \delta$ it  holds $F_- \subset E_0 \subset F_+$. 

Let  $(F_t^h)_t$ be the approximate flat flow with the constant forcing term $f = -\Lambda$,  where $\Lambda :=  C_0 +1$, with $C_0$  as in \eqref{f1}, starting from $F_-$. Then by Proposition \ref{compa prin}  it holds $F_t^h\subset E_t^h$ for all 
$t>0$. Note that $F_-$ is a union of  disks with radius $R = 1 -\delta^{1/4}$ and with positive distance to each other. It is easy to see that $(F_t^h)_t$ is decreasing, i.e., $F_t^h \subset F_s^h$ for $t >s$ and therefore 
it is enough to study the evolution of a one single disk $D_{R}$, because the flow $(F_t^h)_t$  is the union of them. If now $(\tilde{F}_t^h)$ is the  approximate flat flow starting from   $D_{R}$ with the forcing term $f = -\Lambda $ then it is not difficult to see that 
for $t \in (kh, (k+1)h]$ the set $\tilde{F}_t^h$ is a concentric disk with radius $r_{k+1}$ and by the Euler-Lagrange equation \eqref{euler} it holds
\[
\frac{ r_{k+1}  - r_k}{h}  = -\frac{1}{r_{k+1}}-\Lambda. 
\]
Therefore, it holds
\[
 r_{k+1}  - r_k  \geq -(\Lambda +2) h.
\]
for all  $k = 0,1,2,\dots $ for which $r_{k+1} \geq 1/2$. By adding this over  $k=0,1,\dots, K$  with  $\sqrt{\delta}/h \leq  K  \leq 2\sqrt{\delta}/h$ and recalling that  $r_0 = R =  1 -\delta^{1/4}$ we obtain
\[
r_K  \geq r_0  -2\sqrt{\delta}(\Lambda +2) \geq 1 -  2\delta^{1/4},
\]
when $\delta$ is small. This implies  $\sup_{x \in D_R \setminus  \tilde{F}_t^h} d_{\pa D_R}(x) \leq 2 \delta^{1/4}$ for $t \in (0, \sqrt{\delta})$ and thus by the previous discussion  
\[
\sup_{x \in F \setminus F_t^h}d_{\pa F}(x) \leq  2\delta^{1/4} \qquad \text{for } \,  t \in (0, \sqrt{\delta}).
\]
 Since $F_t^h\subset E_t^h$ we have 
\beq 
\label{compa1}
 \sup_{x \in F \setminus E_t^h}d_{\pa F}(x)  \leq  2 \delta^{1/4} \qquad \text{for } \,  t \in (0, \sqrt{\delta}). 
\eeq

We need yet to show that 
\beq
\label{compa2}
 \sup_{x \in  E_t^h \setminus F}d_{\pa F}(x)  \leq  5 \delta^{1/4} \qquad \text{for } \,  t \in (0, \sqrt{\delta}).
\eeq
 Denote $\Gamma =  \{x \in \R^2 \setminus F :d_{\pa F}(x) = 5\,  \delta^{1/4} \} $. Fix  $x \in \Gamma$  and denote the disk $D_r(x)$ with $r = 4 \, \delta^{1/4}$. Then by $E_0 \subset F_+$  and $D_r(x) \subset \R^2 \setminus F_+$ we have   $E_0 \subset   \R^2 \setminus D_r(x)$ if $\delta$ is sufficiently small. Let $(G_t^h)_t$ be the  approximate flat flow starting from   $D_r(x)$ with the constant forcing term $f = -\Lambda $.   Arguing as above we deduce that for $t \in (kh, (k+1)h]$ the set $G_t^h$ is  disk with radius $r_{k+1}$, i.e.,  $G_t^h = D_{r_{k+1}}(x)$ and 
\[
\frac{ r_{k+1}  - r_k}{h}  = -\frac{1}{r_{k+1}}-\Lambda \geq - \delta^{-1/4} - \Lambda
\]
 for $k = 0,1,\dots$ for which $r_{k+1} \geq \delta^{1/4}$. By adding this  over  $k=0,1,\dots, K$  with  $\sqrt{\delta}/h \leq  K  \leq 2\sqrt{\delta}/h$ and recalling that $r_0 =r = 4\, \delta^{1/4}$ we obtain 
\[
r_K  \geq r_0  - 2 \sqrt{\delta} (\delta^{-1/4} + \Lambda) \geq 4 \,  \delta^{1/4}  - 3  \, \delta^{1/4} = \, \delta^{1/4},
\]
when $\delta$ is small. In other words  $D_{\delta^{1/4}}(x) \subset  G_t^h$ for all $t \in (0, \sqrt{\delta})$. Since $E_0 \subset   \R^2 \setminus D_r(x)$  Proposition \ref{compa prin} yields 
\[
 E_t^h \subset \R^2 \setminus  G_t^h \subset \R^2 \setminus D_{ \delta^{1/4}}(x)
 \]
for all $0<t \leq \sqrt{\delta}$.  By repeating the same argument for all  $x \in \Gamma$  we conclude that the flow $E_t^h$ does not intersect $\Gamma$ for any $t \in (0,\sqrt{\delta})$. This implies  \eqref{compa2}. 
\end{proof}

In the next lemma we show that if $E_0$ is stationary then necessarily it is a disjoint union of disks, i.e., a critical set of the isoperimetric problem. 

\begin{lemma}
\label{stat-crit}
Assume  $E_0 \subset \R^2$ is a bounded  set of finite perimeter. If $E_0$ is stationary according to Definition \ref{stationary} then it is a disjoint union of disks with equal radii.
\end{lemma}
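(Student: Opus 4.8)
The plan is to turn stationarity into the statement that the energy $\mathcal E$ has vanishing dissipation as $h\to 0$, and then feed this into the quantitative Alexandrov theorem of Lemma~\ref{alex}. Since $E_0$ is stationary, every flat flow starting from it is constant in time, i.e. $E_t\equiv E_0$. After a rescaling we may assume $c_0=1$. Fix $T>0$; by Proposition~\ref{MSS}~(iv) we pass to a subsequence $h\to 0$ along which the approximate flows converge to $E_t\equiv E_0$ in $L^1$, uniformly for $t\in(h,T]$, so that $|E^{h,k}\Delta E_0|\to 0$ uniformly over $h<kh\le T$.

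Next I would record the discrete dissipation estimate. Since $f\equiv 1$ forces $\bar f(kh)=1$ for every $k$, the computation in the proof of Proposition~\ref{MSS2}~(i) yields directly, for every $k\ge 0$,
\[
\frac1h\int_{E^{h,k+1}\Delta E^{h,k}}|\bar d_{E^{h,k}}|\,dx+\mathcal E(E^{h,k+1})\le\mathcal E(E^{h,k}).
\]
Coupling this with the inequality $c\,h\int_{\pa E^{h,k+1}}(\bar d_{E^{h,k}}/h)^2\,d\Ha^1\le\frac1h\int_{E^{h,k+1}\Delta E^{h,k}}|\bar d_{E^{h,k}}|\,dx$ used in that proof, and with the Euler--Lagrange equation \eqref{euler}, which in the plane reads $\bar d_{E^{h,k}}/h=-(k_{E^{h,k+1}}-1)$ on $\pa E^{h,k+1}$ (the singular set being empty for $n=2$), and then telescoping from $k=0$ to $k=K-1$ with $K:=\lfloor T/h\rfloor$, I obtain
\[
c\,h\sum_{k=1}^{K-1}\int_{\pa E^{h,k+1}}(k_{E^{h,k+1}}-1)^2\,d\Ha^1\le\mathcal E(E_0)-\mathcal E(E^{h,K}).
\]
Since $E^{h,K}\to E_0$ in $L^1$ (because $E^{h,K}=E^h_{Kh}$ with $h<Kh\le T$ and $E_t\equiv E_0$) and the perimeter is lower semicontinuous, $\liminf_h\mathcal E(E^{h,K})\ge\mathcal E(E_0)$, hence the right-hand side tends to $0$.

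The sum above has $K-1\sim T/h$ nonnegative terms, so an averaging argument provides an index $k_h\in\{1,\dots,K-1\}$ for which $G_h:=E^{h,k_h+1}$ satisfies $\|k_{G_h}-1\|_{L^2(\pa G_h)}\to 0$. The set $G_h$ is $C^{2,\alpha}$-regular (in the plane the singular set of a minimizing movement is empty), is contained in a fixed ball and has $P(G_h)\le M=M(T)$ by Proposition~\ref{MSS}~(i)--(ii), and $G_h\to E_0$ in $L^1$ by the choice of the subsequence. By Cauchy--Schwarz $\|k_{G_h}-1\|_{L^1(\pa G_h)}\le P(G_h)^{1/2}\|k_{G_h}-1\|_{L^2(\pa G_h)}\to 0$, so Lemma~\ref{alex} applies to $G_h$: there are $N_h\in\N$ and points $x^h_1,\dots,x^h_{N_h}$ with $|x^h_i-x^h_j|>2$ such that, setting $F_h:=\cup_{i=1}^{N_h}D_1(x^h_i)$, one has $\sup_{x\in G_h\Delta F_h}d_{\pa F_h}(x)\to 0$ and $|P(G_h)-2\pi N_h|\to 0$. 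The second bound together with $P(G_h)\le M$ forces $N_h$ to be bounded, hence (along a further subsequence) equal to a fixed $N$, with $P(G_h)\to 2\pi N$; the first bound confines $G_h\Delta F_h$ to a vanishing tubular neighbourhood of $\pa F_h$, so $|G_h\Delta F_h|\to 0$ and therefore $F_h\to E_0$ in $L^1$ with $|E_0|=\lim_h|F_h|=N\pi$. Finally the centres $x^h_i$ stay in a bounded region, so up to a subsequence $x^h_i\to x_i$ with $|x_i-x_j|\ge 2$, and passing to the limit gives $E_0=\cup_{i=1}^N D_1(x_i)$ up to a Lebesgue-null set: a disjoint union of $N$ disks of radius $1$, with possibly tangent closures. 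Undoing the rescaling yields the assertion with radius $1/c_0$.

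The main obstacle is the passage from the telescoped discrete estimate to a single regular slice with controlled curvature: that estimate carries a factor $h$ in front of the curvature integrals, so selecting just one time step would only give $\int_{\pa G_h}(k_{G_h}-1)^2=o(1/h)$, which is worthless; it is crucial to run the dissipation over $\sim T/h$ consecutive steps so that averaging cancels the $h$ and produces $\|k_{G_h}-1\|_{L^2(\pa G_h)}\to 0$. A secondary subtlety is combining lower semicontinuity of the perimeter with the (almost) monotonicity of $\mathcal E$ to guarantee that the dissipation genuinely vanishes in the limit rather than merely staying bounded; here the fact that the limiting slices all coincide with $E_0$, by stationarity, is what makes the $h$-dependence of the final time $Kh$ harmless.
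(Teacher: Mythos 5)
Your proof is correct and follows essentially the same route as the paper: stationarity plus the discrete dissipation inequality, lower semicontinuity of the perimeter to make the dissipation vanish, selection of a good time slice by averaging (the paper uses the mean value theorem over $(2h,T)$), and then the quantitative Alexandrov Lemma~\ref{alex} to conclude that $E_0$ is a union of essentially disjoint disks of radius $1/c_0$. The only cosmetic differences are that you telescope the discrete energy inequality directly from $k=0$ instead of quoting Proposition~\ref{MSS2}~(i) together with \eqref{prop MSS 3}, and you carry out the final limit of the disk centers a bit more explicitly than the paper does.
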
 

\begin{proof}
Let us fix $T>1$ and $\eps>0$ and  let $(E_t^h)_t$ be  an approximate flat flow starting from $E_0$. Then for any $\delta >0$  it holds by  Definition \ref{stationary} and by Proposition \ref{MSS}  that 
\beq \label{s-c 1}
\sup_{h< t \leq T} |E_t^h \Delta E_0| \leq \delta 
\eeq
for small $h$. Now the forcing term satisfies trivially the assumption \eqref{f2} and therefore the left hand side of  \eqref{prop MSS2 1} is always zero.   Then from the proof of Proposition \ref{MSS2} (i) we get that for every $h$ sufficiently small 
\[
c \int_{2h}^{T}\int_{\pa E_t^h} (k_{E_t^h} - c_0)^2 \, d\Ha^{1}dt  + \mathcal{E}(E_{T}^h )\leq  \mathcal{E}(E_{h}^h).
\]
Recall that $ \mathcal{E}(E) = P(E) - c_0 |E|$. By \eqref{s-c 1} it holds $|E_T^h \Delta E_h^h| \leq 2\delta < \frac{\eps}{c_0}$.  Therefore we have
\[
c \int_{2h}^{T}\int_{\pa E_t^h} (k_{E_t^h} - c_0)^2 \, d\Ha^{1}dt + P(E_T^h) \leq P(E_h^h) + \eps. 
\]
Finally by \eqref{prop MSS 3} and \eqref{s-c 1} we obtain
\[
P(E_h^h) \leq P(E_0) + c_0( |E_{h}^h| -  |E_0| ) \leq  P(E_0) + c_0 \delta \leq  P(E_0) + \eps. 
\]
Hence,
\[
c \int_{2h}^{T}\int_{\pa E_t^h} (k_{E_t^h} - c_0)^2 \, d\Ha^{1}dt + P(E_T^h) \leq P(E_0) + 2\eps. 
\]
By \eqref{s-c 1} it holds $|E_T^h \Delta E_0| \leq \delta$. Therefore by the lower semicontinuity of the perimeter it holds $P(E_0) \leq P(E_T^h) + \eps$ when $\delta$ and $h$ are small. Therefore we have
\[
c \int_{2h}^{T}\int_{\pa E_t^h} (k_{E_t^h} - c_0)^2 \, d\Ha^{1}dt   \leq 3\eps.
\]
By the mean value theorem there is $t <T$ such that 
\[
\| k_{E_t^h} - c_0\|_{L^2} \leq C \sqrt{\eps}. 
\] 
Since by Proposition~\ref{MSS2} $\sup_{t\geq0}P(E^h_t)\leq M$ for some $M$ independent of $h$,  from the previous inequality and from  Lemma \ref{alex} it follows that there are points $x_1, \dots, x_N$, with $|x_i - x_j| \geq 2r $, where $r = \frac{1}{c_0}$, such that for the set $F= \cup_{i=1}^N D_r(x_i)$ it holds
\[
\sup_{x \in E_t^h \Delta F} d_{\pa F}(x) \leq C \sqrt{\eps}.
\]
Thus by \eqref{s-c 1} it holds 
\[
|E_0 \Delta F| \leq C \sqrt{\eps}. 
\]
Note that the points $x_i$ might depend on $t$ and  on $h$ but the radius $r$ and the  number of disks $N$ does not. Therefore we  conclude that the set $E_0$ is arbitrarily close to a  union  of essentially disjoint   disks.
This implies that the set $E_0$  itself is a  union of essentially disjoint disks with radii $r = \frac{1}{c_0}$.  
\end{proof}

\begin{proof}[\textbf{Proof of  Theorem \ref{thm1}}]

Without loss of generality we may assume that 
\[
E_0 = D_1(-e_1) \cup D_1(e_1)
\]
where $e_1 = (1,0)$. Let us now fix a small $h>0$  and consider the minimization problem \eqref{min mov} which gives a
  sequence of sets $(E^{h,k})_{k=1}^\infty$ and thus an approximate flat flow $(E_t^h)_t$. 

 Let us fix $\eps_0>0$. Then for  $\delta$ small enough  we have  by Lemma  \ref{compa}   that  for $k \leq \frac{\delta}{h}$ it holds 
\beq
\label{balls inside}
\left( D_{1-\eps_0}(-e_1) \cup D_{1-\eps_0}(e_1)\right) \subset E^{h,k}  \subset  \left(D_{1+\eps_0}(-e_1) \cup D_{1+\eps_0}(e_1)\right),
\eeq
when $h$ is small. Moreover,  by Lemma \ref{distance} it holds 
\[
\left(D_{1-C_1\sqrt{h}}(-e_1) \cup D_{1-C_1\sqrt{h}}(e_1) \right) \subset E^{h,1}.
\]

Let us improve the above estimate and show that the set $ E^{h,1}$ contains a large simply connected set.  To be more precise, we denote  the rectangle $R^h_{\eta} = (-2C_1 \sqrt{h}, 2C_1 \sqrt{h}) \times  (-\eta h^{1/4},\eta h^{1/4})$  and prove  that for $\eta>0$ small, independent of $h$, it holds
\beq
\label{thm2 st2 1}
\left( D_{1-C_1\sqrt{h}}(-e_1) \cup D_{1-C_1\sqrt{h}}(e_1)\right)  \cup R^h_{\eta} \subset E^{h,1}.
\eeq
We argue by contradiction assuming that $\pa E^{h,1} \cap  (-2C_1 \sqrt{h}, 2C_1 \sqrt{h}) \times  (-\eta h^{1/4},\eta h^{1/4})$ is non-empty. We denote 
  the rectangle $R^h_{3 \eta}  =(-2C_1 \sqrt{h}, 2C_1 \sqrt{h}) \times  (-3\eta h^{1/4}, 3 \eta h^{1/4})$ and define  
\[
\tilde{E}^{h,1} =E^{h,1} \cup R^h_{3 \eta}.
\]
By the contradiction assumption the length of the curve  $\pa E^{h,1} \cap R^h_{3 \eta}$ is greater than $2 \eta  h^{1/4}$.   We have then the following  estimate 
\beq
\label{peri smaller}
P(\tilde{E}^{h,1}) \leq P(E^{h,1}) - \eta h^{1/4} ,
\eeq
when $h$ is small. We also have 
\begin{itemize}
\item[(i)] $\big| |\tilde{E}^{h,1}| - |E^{h,1}| \big| \leq |R^h_{3 \eta}| = 24 C_1  \eta h^{\frac34}$
\item[(ii)] $|\bar{d}_{E_0}(x)| \leq 10 \eta^2\sqrt{h}$ for all $x \in R^h_{3 \eta} \setminus E_0$. 
\end{itemize}
It follows from (i),   (ii)  and  $\sup_t |f(t)| \leq C_0$ that 
\[
\begin{split}
\frac{1}{h} \int_{\tilde{E}^{h,1}} &\bar{d}_{E_0} \, dx  + \bar f(h) |\tilde{E}^{h,1}| \\
&\leq  \frac{1}{h} \int_{E^{h,1}} \bar{d}_{E_0} \, dx  + \bar f(h) |E^{h,1}|  +  \frac{1}{h} \int_{R^h_{3\eta} \setminus E_0} \bar{d}_{E_0} \, dx + C_0 \big| |\tilde{E}^{h,1}| - |E^{h,1}| \big| \\
&\leq    \frac{1}{h} \int_{E^{h,1}} \bar{d}_{E_0} \, dx  + \bar f(h) |E^{h,1}|  + C \eta^3 h^{1/4} 
\end{split}
\]
when $h$ is small.  Therefore  using \eqref{peri smaller} and the above inequality we may estimate
\[
\begin{split}
\mathcal{F}(\tilde{E}^{h,1};E_0) &\leq \mathcal{F}(E^{h,1};E_0) -   \eta h^{1/4} + C \eta^3 h^{1/4}  <  \mathcal{F}(E^{h,1};E_0) 
\end{split}
\]
when $\eta>0$ is small enough. This contradicts the minimality of $E^{h,1}$ and we obtain \eqref{thm2 st2 1}.

\medskip

We continue  by constructing a barrier set $G_h$  (see Figure \ref{fig2}) and prove that $G_h \subset E^{h,k}$ for every $k \leq \delta/h$.  For $h\geq0$ we define $\varphi_h : (-3\eps_0, 3 \eps_0) \to \R$ as
\[
\varphi_h(s) = 3 \eps_0 - \sqrt{9 \eps_0^2  - s^2} + h
\]
and define the set
\[
G_{\varphi_h} := \{(x_1,x_2) \in \R^2  :  x_1 \in (-3\eps_0, 3 \eps_0) , \,\, |x_2| < \varphi_h(x_1) \} ,
\] 
which is 'the neck'. We define the barrier set as 
\[
G_h = \left( D_{1-2\eps_0}(-e_1) \cup D_{1-2\eps_0}(e_1)\right) \cup G_{\varphi_h}.
\]
 The barrier set  $G_h$ is open and connected and we have the estimate on the curvature at the neck
\beq
\label{thm2 st3 1}
\text{\emph{for every }}\, x \in \pa G_h \setminus  (\bar{D}_{1 -2\eps_0}(-e_1) \cup \bar{D}_{1 -2\eps_0}(e_1) )\,\, \text{\emph{it holds }}\, k_{G_0}(x) = -\frac{1}{3 \eps_0}.
\eeq 
Moreover, we notice that when $h$ is small then by \eqref{thm2 st2 1} it holds 
\[
G_h \subset E^{h,1} 
\]
In fact,   \eqref{thm2 st2 1} implies that 
\beq
\label{thm2 st3 2}
\inf_{\R^2 \setminus E^{h,1}} d_{G_h}(x) \geq c\, h^{1/4}
\eeq 
for  small $c>0$. 

\begin{figure}
\begin{tikzpicture}
\clip (-5, -3.3) rectangle (5, 3.3);
\begin{scope}[yscale=1.8,xscale=1.8]

\fill[draw=black, thin, fill=black!20!white] (-1.2,0)+(1.2,0.08)  .. controls (-.05, .08) and (-.2, .12) ..  +(15:1) arc (15: 345: 1)++(0,0) .. controls (-.2, -.12) and (-.05, -.08) .. (0,-0.08)--(0,0.08);

\begin{scope}[yscale=1,xscale=-1]
\fill[draw=black, thin, fill=black!20!white] (-1.2,0)+(1.2,0.08)  .. controls (-.05, .08) and (-.2, .12) ..  +(15:1) arc (15: 345: 1)++(0,0) .. controls (-.2, -.12) and (-.05, -.08) .. (0,-0.08)--(0,0.08);
\end{scope}

\draw (0, 0.4) 
.. controls ++(180:0.1) 
and ++(330:0.3) 
.. (-0.8, 1.1)  
.. controls ++(330:-0.3) and ++(40:0.3) .. (-1.7, 1.1) 
.. controls ++(40:-0.3) and ++(90:.5) .. (-2.4, 0);

\begin{scope}[yscale=1,xscale=-1]
\draw (0, 0.4) .. controls ++(180:0.1)  and ++(330:0.3) .. (-0.8, 1.1)  
.. controls ++(330:-0.3) and ++(40:0.3) .. (-1.7, 1.1)
.. controls ++(40:-0.3) and ++(90:.5) .. (-2.4, 0);
\end{scope}

\begin{scope}[yscale=-1,xscale=1]
\draw (0, 0.4) .. controls ++(180:0.1)  and ++(330:0.3) .. (-0.8, 1.1)  
.. controls ++(330:-0.3) and ++(40:0.3) .. (-1.7, 1.1)
.. controls ++(40:-0.3) and ++(90:.5) .. (-2.4, 0);
\end{scope}

\begin{scope}[yscale=-1,xscale=-1]
\draw (0, 0.4) .. controls ++(180:0.1)  and ++(330:0.3) .. (-0.8, 1.1)  
.. controls ++(330:-0.3) and ++(40:0.3) .. (-1.7, 1.1)
.. controls ++(40:-0.3) and ++(90:.5) .. (-2.4, 0);
\end{scope}

\draw[->] (-2.8,0) -- (2.8,0); 
\draw (2.6,0.2) node 
                       {$x_1$};
\draw[->] (0,-1.8) -- (0,1.8); 
\draw (0.3,1.6) node 
                       {$x_2$};
\draw (1.5, 0.3) node 
                       {$G_h$};
\draw (-1.9, 1.2) node 
                       {$\partial E^{h,k}$}; 
\end{scope}                      
\end{tikzpicture}
\caption{The boundary of $\pa E^{h,k}$ lies outside of the barrier set $G_h$.}\label{fig2}
\end{figure}
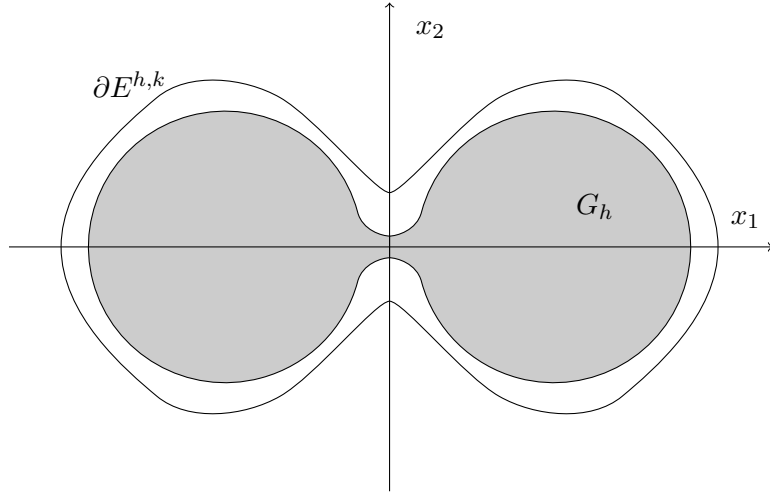

Let us define 
\[
\rho_k := \inf_{\R^2 \setminus E^{h,k}} d_{G_h}(x)
\]
for $k=1,2,\dots$ and $\rho_0 := 0$.  We claim  that   for every $ k \leq \frac{\delta}{h}$ it holds
\beq
\label{thm2 st3 3}
\rho_{k+1} - \rho_k \geq 2h \qquad \text{or} \qquad  \rho_{k+1} \geq \frac{\eps_0}{2}
\eeq 
when $h$ is small. 

We prove \eqref{thm2 st3 3} by induction and notice that for $k=0$ the inequality \eqref{thm2 st3 3} is already proven since \eqref{thm2 st3 2} implies  $\rho_1 \geq c\, h^{1/4}$.  Let us assume that    \eqref{thm2 st3 3} holds for $k-1$ and prove it for $k$.  Let us  assume that  $\rho_{k+1} < \frac{\eps_0}{2}$.  The induction assumption and  $\rho_1 \geq c\, h^{1/4}$ yields $\rho_{k} \geq c\, h^{1/4}$. On the other hand by Lemma \ref{distance} it holds $\sup_{E^{h,k+1} \Delta E^{h,k}} d_{\pa E^{h,k}} \leq C_1\sqrt{h}$ and therefore $\rho_{k+1} >0$. 

 Let $x_{k+1} \in  \pa E^{h,k+1}$ and $y_{k+1} \in \pa G_h$ be such that $|x_{k+1}-y_{k+1}| =  \min_{x \in \pa E^{h,k+1}} d_{G_h}(x) = \rho_{k+1}$. By  \eqref{balls inside} it holds $x_{k+1} \notin D_{1-\eps_0}(-e_1) \cup D_{1-\eps_0}(e_1)$ and therefore by $\rho_{k+1} < \frac{\eps_0}{2}$ we have
\[
y_{k+1} \in  \pa G_h \setminus  (\bar{D}_{1 -2\eps_0}(-e_1) \cup \bar{D}_{1 +2\eps_0}(e_1) ).
\]
Then \eqref{thm2 st3 1} yields
\[
k_{G_h}(y_{k+1}) = -\frac{1}{3 \eps_0}.
\]
Since $x_{k+1}$ is a point of minimal distance  $k_{E^{h,k+1}}(x_{k+1}) \leq k_{G_h}(y_{k+1}) = -\frac{1}{3 \eps_0}$. By taking $\eps_0$ smaller, if needed,  we have by the Euler-Lagrange equation \eqref{euler} and by $\sup_{t>0}|f(t)| \leq C_0$ that
\beq \label{thm2 st3 a}
\frac{ \bar{d}_{E^{h,k}}(x_{k+1})}{h} = -k_{E^{h,k+1}}(x_{k+1}) + \bar f(kh) \geq  \frac{1}{3 \eps_0} -C_0\geq  2.
\eeq

The inequality \eqref{thm2 st3 a} and  $G_h \subset E^{h,k}$ imply that $x_{k+1} \notin E^{h,k}$ and $y_{k+1} \in E^{h,k}$. Thus there is a point  $z_{k+1}$ on the segment $[x_{k+1},y_{k+1}]$ such that $z_{k+1} \in \pa E^{h,k}$. Since $y_{k+1} \in  \pa G_h$ it holds  $|z_{k+1}-y_{k+1}| \geq \rho_{k}$ and  by \eqref{thm2 st3 a}
 we have $|x_{k+1}-z_{k+1}|\geq \bar{d}_{E^{h,k}}(x_{k+1}) \geq 2h$. Therefore because  $z_{k+1}$ is on the segment $[x_{k+1},y_{k+1}]$ we have
\[
\rho_{k+1} = |x_{k+1}-y_{k+1}|  = |x_{k+1}-z_{k+1}|  + |z_{k+1}-y_{k+1}|  \geq 2h + \rho_{k}.
\]
Thus we have  \eqref{thm2 st3 3}.

Let us conclude the proof. By adding  \eqref{thm2 st3 3} together for  $k=0,1,2, \dots, K$ with $ K \leq \delta/h$ we deduce that  for $\delta$ small it holds 
\beq \label{thm2 st3 b}
\inf_{\R^2 \setminus E_t^h} d_{G_h}(x) \geq  t \qquad \text{for all }\, t \in (h,\delta].  
\eeq
In particular $G_h \subset E_t^h$. Let $h_l\to 0$ be any sequence such that $\sup_{h_l< t\leq\delta}|E^{h_l}_t\Delta E_t|\to0$, see Proposition \ref{MSS}. By  \eqref{thm2 st3 b} we get
\[
\inf_{\R^2 \setminus E_t} d_{G_0}(x) \geq  t \qquad \text{for all }\, t \in (0,\delta].
\]
This inequality implies, in particular, that $E_t$ contains a ball centered at the origin with radius $t$ for all $t \in(0,\delta)$  and that 
\[
 |E_t \setminus E_0| \geq c\,t^3.
\]
This is the second statement  of Theorem \ref{thm1}.
The inequality \eqref{thm2 st3 b} implies that 
\[
 \{ x \in \R^2 : d_{G_h}(x) <t  \} \subset E_t^h  \qquad \text{for all }\, t \in (h,\delta].
\]
Passing to the limit as above, along the subsequence $h_l$, we deduce 
\[
 \{ x \in \R^2 : d_{G_0}(x) <t  \} \subset E_t  \qquad \text{for all }\, t \in (0,\delta].
\]
The first claim follows from the fact that $ \{ x \in \R^2 : d_{G_0}(x) <t  \}$ is open and  simply connected.
\end{proof}

We conclude this section by explaining how Corollary \ref{corollary} follows from Theorem \ref{thm1}.
\begin{proof}[\textbf{Proof of Theorem \ref{corollary}}]
First, it is easy to see that if $E_0$ is a  union of  disks with equal radius and with positive distance to each other, then $E_0$ is stationary according to the Definition~\ref{stationary}. If $E_0$ is stationary then by Lemma \ref{stat-crit} it is critical, i.e., finite union of 
essentially disjoint disks $D_r(x_i)$, with $r=1/c_0$ and $i=1,\dots,N$.  We need to show if $i\not=j$ then $|x_i-x_j|>2/c_0$.  If by contradiction there are two tangential disks, say $D_r(x_1)$ and $D_r(x_2)$, then we define $F_0 = D_r(x_1) \cup D_r(x_2)$. Let $(E_t)_t$ be a flat flow starting from $E_0$ and let $h_l\to0$ be a sequence such that   $|E^{h_l}_t\Delta E_t|\to0$ and  $|F^{h_l}_t\Delta F_t|\to0$, where $(F_t)_t$ is a flat flow starting from $F_0$ with  forcing term $g = c_0 -\eps$. Then by Proposition \ref{compa prin}  $F_t ^{h_l}\subset E_t^{h_l}$ for all $t>0$ and $h_l$, hence $F_t \subset E_t$. By Theorem \ref{thm1} we have that there exist $\delta, c>0$ such that for all $t\in(0,\delta)$ 
\[
 |F_t \setminus F_0| \geq c\,t^3.
\]
This implies 
\[
 |E_t \setminus E_0|  \geq c\,t^3
\]
and therefore $E_0$ is not stationary. 
\end{proof}

\section{Proofs of  Theorem \ref{thm2} and Theorem \ref{thm3}}

\begin{proof}[\textbf{Proof of  Theorem \ref{thm2}}]

First, by scaling we may assume that $c_0 =1$ in the assumption \eqref{f2}.

Let $(E_t^h)$ be an approximate flat flow which converges, up to a subsequence, to $(E_t)_t$. We simplify the notation and denote the converging subsequence again by $h$.    From the assumption $\sup_{t >0} |E_t|= M$ and from  Proposition \ref{MSS} (iv)   it follows that  for every $T>0$ there is $h_{ T}$ such that up to subsequence of $h$ it holds 
\[
\sup_{0< t <T} |E_t^h| \leq 2M \qquad \text{for all } \, 0<h<h_{ T}.
\]
Then Remark~\ref{rem energy} yields that there exists a constant $\tilde C$ independent of $h$ and $T$ such that for $0<h<h_T$
\[
\sup_{0<t<T} P(E_t^h) \leq \tilde{C}.
\]

The dissipation inequality in Proposition \ref{MSS2} and the  above volume and perimeter  bounds imply 
\[
\int_{T_0}^{T} \int_{\pa E_t^h} (k_{E_t^h} - \bar f(t-h))^2 \, d \Ha^1 dt \leq C 
\]
for some $T_0>0$  for every $T > T_0$. Then the assumption \eqref{f2} (recall that $c_0 = 1$) yields
$$
\int_{T_0}^{T} \int_{\pa E_t^h} (k_{E_t^h} - 1)^2 \, d \Ha^1 dt \leq C .
$$
for some $T_0$ large and for every $T > T_0$ and $0<h<h_T$.  In particular, if we denote $I_j = [(j-1)^2, j^2]$ for  $j =1,2,\dots , k < \sqrt{T} $ then it  holds  
\[
\int_{I_j} \int_{\pa E_t^h} (k_{E_t^h} - 1)^2 \, \, d \Ha^1 dt \leq C
\]
for $j$ large. Let us fix a small $\eps>0$. From the previous inequality we obtain that there exists $j_\eps$ such that, if $j_\eps\leq j\leq\sqrt{T}$ and $0<h<h_T$ there exists $T_{h,j}$ such that 
\[
 (j-1)^2 \leq T_{h,j} \leq j^2,  \qquad  \text{and } \qquad   \Big\| k_{E_{T_{h,j}}^h} - 1\Big\|_{L^2(E_{T_{h,j}}^h)}  \leq \eps.
\]
We  deduce by Lemma \ref{alex} that the set  $E_{T_{h,j}}^h$ is close to  a disjoint union of $N_{h,j}$   disks of radius one. Since the measures of $E_{T_{h,j}}^h$ are 
uniformly bounded, we conclude that there is $N_0$ such that $N_{h,j} \leq N_0$.  Moreover, we have by Lemma \ref{alex} that 
\beq \label{thm1 2}
\big| P(E_{T_{h,j}}^h) - 2 \pi N_{h,j}\big| + \big| |E_{T_{h,j}}^h| -  \pi N_{h,j}\big|  \leq C \eps.
\eeq
This implies the following estimate for the energy $\mathcal{E}(E_{T_{h,j}}^h) = P(E_{T_{h,j}}^h) - |E_{T_{h,j}}^h| $,
\beq\label{thm3bis}
\big|  \mathcal{E}_{T_{h,j}}(E_{T_{h,j}}^h)  - \pi N_{h,j} \big| \leq   C \eps.
\eeq
In other words, at $T_{h,j}$ the energy has almost the value $\pi N_{h,j}$. Since the energy $\mathcal{E}_{T_{h,j}}(E_{T_{h,j}}^h)$ is asymptotically  almost decreasing by Remark \ref{rem energy}, we deduce that the sequence of numbers $N_{h,j}$ is decreasing for $j$ large, i.e., 
\[
N_{h,j} \geq N_{h,j+1} \qquad \text{for every }\, j_\eps \leq j \leq \sqrt{T}.
\]

By letting $h \to 0$ we conclude by  Proposition \ref{MSS} (iv) and by standard diagonal argument that, by extracting another subsequence if needed,  there is sequence of times $T_j $, with $j \geq j_\eps$,  such that $ (j-1)^2 \leq T_j\leq j^2$ and the set  $E_{T_j}$ is close to $N_{j}$ many disjoint disks of radius one and that $N_j \geq N_{j+1}$ for every $j \geq j_\eps$.  This implies that, there is $j_0 \geq j_\eps$ and $N$ such that 
\[
N_{j} = N \qquad \text{for all } \,   j \geq j_0.
\]
 This means that every $E_{T_{j}}$, for $j\geq j_0$, is close in $L^1$-sense to disjoint union of exactly  $N$ many disks of radius one. By the locally uniform $L^1$-convergence $(E_t^h)_t \to (E_t)$  for every $T>j_0^2$ we have 
 \beq\label{thm3ter}
 N_{h,j} = N \qquad \text{for $j_0 \leq j \leq \sqrt{T},$}
 \eeq
 when $h$ is small. Therefore we conclude from \eqref{thm1 2} and  from the dissipation inequality  in Proposition \ref{MSS2} that  for any  $\delta>0$  there is $T_\delta$ such that for all $T > T_\delta$ it holds
\beq  \label{thm1 3}
\int_{T_\delta}^T \int_{\pa E_t^h} (k_{E_t^h} - 1)^2 \, \, d \Ha^1 dt \leq \delta^3
\eeq
 when $h$ is small.

Let us fix $T >> T_\delta$ and denote by $J_h \subset  (T_\delta,T)$ the set of times $t \in (T_\delta,T)$ for which 
\[
\|k_{E_t^h} - 1\|_{L^2(\pa E_t^h)} \geq \delta.
\]
Then by \eqref{thm1 3} it holds $|J_h| \leq \delta$.   If $\delta$ is small enough, by Lemma \ref{alex},  from \eqref{thm3bis}, \eqref{thm3ter} and \eqref{remenergy1} we deduce that the sets $ E_t^h$ satisfy 
\beq \label{thm1 5}
\sup_{x \in E_{t}^h \Delta F_{t}^h} d_{\pa F_{t}^h}(x) \leq C\delta \qquad \text{for  all } \, t \in (T_\delta,T) \setminus J_h,
\eeq
where $F_t^h = \cup_{i=1}^N D_1(x_i)$ with $|x_i-x_j| \geq2 $ for $i \neq j$. Note that the points $x_i$ may depend on $t$ and $h$.

We will  show that for all  $t \in (T_\delta +2\delta,T) $  it holds 
\beq \label{thm1 4}
\sup_{x \in E_{t}^h \Delta F_{t}^h} d_{\pa F_{t}^h}(x) \leq C \delta^{1/4}
\eeq
where $F_t^h$ is a union of $N$ disjoint disks as above.  Let us fix   $t_0\in (T_\delta, T) \setminus J_h$. By \eqref{thm1 5} we have    
\[
\sup_{x \in E_{t_0}^h \Delta F_{t_0}^h}d_{\pa F_{t_0}^h}(x) \leq C \delta.
\]
We use  Lemma \ref{compa} with $E_0 = E_{t_0}^h$ to conclude 
\[
\sup_{x \in  E_{t}^h \Delta F_{t_0}^h}d_{\pa F_{t_0}^h}(x) \leq C \delta^{1/4} \qquad \text{for all } \, t \in [t_0, t_0 +  \sqrt{\delta}).
\]
This means that if we define 
\[
I = \cup_{t \in (T_\delta,T) \setminus J_h} [t, t + \sqrt{\delta})
\]
then \eqref{thm1 4} holds for every  $t \in I$.   But since $|J_h| \leq \delta \leq \sqrt{\delta}/2$,   it easy to see that  $(T_\delta+2\delta,T) \subset I$. Thus  \eqref{thm1 4} holds for every $t \in (T_\delta +2\delta,T) $. 

We have thus proved \eqref{thm1 4}. The claim follows by letting $h \to 0$ and from Proposition~\ref{MSS}. 
\end{proof}

We conclude the paper by proving Theorem~\ref{thm3}. To this end we  recall the Bonnesen symmetrization of a planar set.

Let $E \subset \R^2$ be a measurable  set.  The Bonnesen symmetrization  of $E$ with respect to $x_2$-axis is the set $E^*$, with  the property that for every $r>0$ 
$$
\Ha^1(\pa D_r\cap E^*)=\Ha^1(\pa D_r\cap E)
$$
and  $\pa D_r \cap E^*$ is the union of two circular arcs $\gamma^+_r$ and $\gamma^-_r$ with equal length, symmetric with respect to the  $x_2$-axis and such that $\gamma^-_r$ is obtained by reflecting $\gamma^+_r$ with respect to the $x_1$-axis.

Clearly this symmetrization leaves the area unchanged. Moreover,  if $E$ is a convex set, symmetric with respect to both coordinate axes, then $P(E^*) \leq P(E)$, see  \cite[Page 67]{Bon} (see also \cite{CiLe}).

Let us  prove that the Bonnesen symmetrization decreases the dissipation.   

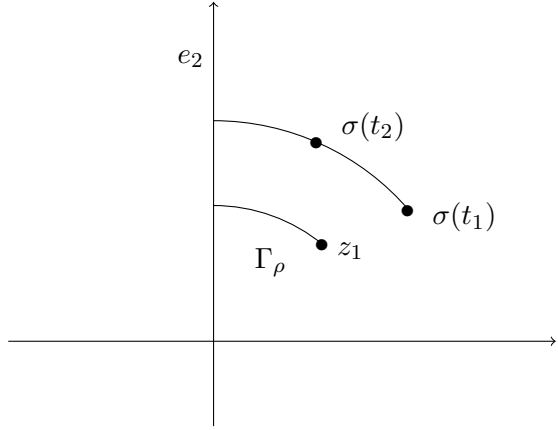
\begin{figure} \label{bonne2}
\begin{tikzpicture}
\begin{scope}[yscale=1.5,xscale=1.5] 
\clip (-3, -3) rectangle (3, 3);


\begin{scope}[yscale=1.5,xscale=1.5] 
\draw  (0,-1.2)+(50:1) arc (50: 90: 1);

\draw  (0,-1.2)+(40:1.5) arc (40: 90: 1.5);

\draw[->] (-1.2,-1) -- (2,-1); 

\draw[->] (0,-1.5) -- (0,1); 
\end{scope}

\draw (0.5,-0.8) node 
                       {$\Gamma_\rho$};

\draw (2.2,-0.4) node 
                       {$\sigma(t_1)$};

\draw (1.4,0.4) node 
                       {$\sigma(t_2)$};

\draw (1.7,-0.35) node 
                       {$\bullet$};

\draw (0.9, 0.25) node 
                       {$\bullet$};

\draw (0.95, -0.65) node 
                       {$\bullet$};

\draw (1.2, -0.7) node 
                       {$z_1$};

\draw (-0.2,1) node 
                       {$e_2$};

\end{scope}[yscale=1.5,xscale=1.5] 
\end{tikzpicture}
\caption{The point $\sigma(t_2)$ is closer to the arc $\Gamma_\rho$ than $\sigma(t_1)$.}\label{bonne2}

\end{figure}

\begin{lemma}
\label{bonnesen}
Let $G \subset \R^2$ be  invariant under Bonnesen symmetrization. Then for any  measurable set $E \subset \R^2$ it holds
\[
 \int_{E^*} \bar{d}_{G} \, dx \leq  \int_E \bar{d}_{G} \, dx .
\]
\end{lemma}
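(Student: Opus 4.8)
The plan is to pass to polar coordinates and reduce the statement to a rearrangement inequality on each circle $\pa D_r$. Writing points of $\R^2$ in polar form $re^{i\phi}$: since Bonnesen symmetrization preserves $\Ha^1(\pa D_r\cap\,\cdot\,)$ for every $r$, and since by the coarea formula $\int_A\bar d_G\,dx=\int_0^\infty\bigl(\int_{\pa D_r\cap A}\bar d_G\,d\Ha^1\bigr)dr$ for every measurable $A$, it is enough to prove that for a.e. $r>0$
\[
\int_{\pa D_r\cap E^*}\bar d_G\,d\Ha^1\le\int_{\pa D_r\cap E}\bar d_G\,d\Ha^1 .
\]
Because $G$ is invariant under Bonnesen symmetrization it is symmetric with respect to both coordinate axes, so the function $g_r(\phi):=\bar d_G(re^{i\phi})$ depends only on the angular distance $\beta(\phi)\in[0,\pi/2]$ of $\phi$ to $\{\pi/2,-\pi/2\}$.

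The heart of the argument is that $g_r$ is a nondecreasing function of $\beta$, i.e. $\bar d_G$ does not increase when a point of $\pa D_r$ is slid along the circle towards the $x_2$--axis. Equivalently, every sublevel set $\{\bar d_G<t\}$ is again invariant under Bonnesen symmetrization: for $t>0$ this set is an open neighbourhood of $G$, while for $t\le0$ it is the complement of a closed neighbourhood of $\R^2\setminus G$, which is itself Bonnesen symmetric with respect to the orthogonal axis. Thus the matter reduces to showing that a neighbourhood of a Bonnesen symmetric set is again Bonnesen symmetric (with respect to the same axis), which I would prove by a rotation argument of the type recorded in Figure~\ref{bonne2}. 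To compare the distances to the set of two points $p=re^{i\phi}$ and $p'=re^{i\phi'}$ with $0\le\phi\le\phi'\le\pi/2$, take any point $q$ of the set; using the two mirror symmetries, replace $q$ by its reflection $\tilde q$ lying in the closed first quadrant, which stays in the set and does not increase the distance to $p$, since $p$ lies in that quadrant. Rotating $\tilde q$ along its circle by $\phi'-\phi$ produces a point at the same distance from $p'$ that $\tilde q$ has from $p$, and this rotated point either is still in the first quadrant — in which case its angular distance to the $x_2$--axis has only decreased, so it is still in the set — or it has crossed the $x_2$--axis, in which case its mirror image across that axis lies in the set and is no farther from $p'$. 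Taking the infimum over $q$ gives the required monotonicity of the distance. The step I expect to require the most care is exactly this: controlling the rotated competitor point when it overshoots a coordinate axis, handled by the extra mirror reflection.

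With this monotonicity in hand, the inequality on each circle follows from the bathtub principle. For fixed $r$, by construction $\pa D_r\cap E^*=\{re^{i\phi}:\beta(\phi)\le\alpha\}$ for a suitable $\alpha\in[0,\pi/2]$, and $\Ha^1(\pa D_r\cap E^*)=\Ha^1(\pa D_r\cap E)$; since $g_r$ is a nondecreasing function of $\beta$, for every $c\in\R$ one has $\{g_r<c\}\subseteq\pa D_r\cap E^*$ or $\pa D_r\cap E^*\subseteq\{g_r\le c\}$, and the standard layer--cake argument then shows that among all $\Ha^1$--measurable $S\subseteq\pa D_r$ with $\Ha^1(S)=\Ha^1(\pa D_r\cap E)$ the set $\pa D_r\cap E^*$ minimises $\int_S g_r\,d\Ha^1$. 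Taking $S=\pa D_r\cap E$ and integrating in $r$ completes the proof. (One may assume $E$ bounded, as in the applications, so that all the integrals converge, and measure--zero ambiguities in the slices $\pa D_r\cap G$ do not affect them.)
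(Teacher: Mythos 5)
Your overall scheme --- slice by circles via the coarea formula, show that $\bar d_G$ is monotone along each circle in the angular distance to the $x_2$-axis, and conclude with the bathtub principle --- is exactly the structure of the paper's proof, and the coarea reduction, the symmetry discussion and the final rearrangement step are fine. The gap is in your proof of the monotonicity, precisely in the overshoot case you yourself flagged. If $\tilde q$ lies at angle $\psi\in[0,\pi/2]$ and the rotation by $\phi'-\phi$ carries it past the $x_2$-axis, the reflected point sits at angular distance $\psi+(\phi'-\phi)-\pi/2$ from that axis, which can be \emph{larger} than the angular distance $\pi/2-\psi$ of $\tilde q$; the slice structure of a Bonnesen symmetric set (pairs of arcs centred at the poles) then gives no reason for the reflected point to belong to the set. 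Concretely, let the slice on $\pa D_1$ consist of two short arcs of angular half-width $\eps$ about $(0,\pm1)$, take $\tilde q$ at angle $\pi/2-\eps$, $\phi=0$, $\phi'=\pi/2$: the rotated point is at angle $\pi-\eps$ and its mirror image at angle $\eps$, far from the poles and not in the set. So the claim ``its mirror image across that axis lies in the set'' is false in general, and the argument as written does not close (both your exterior and interior cases rest on this same neighbourhood claim).

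The repair is easy and keeps your scheme: when the rotation would cross the axis, stop at the axis, i.e.\ take as competitor the pole $\rho e_2$ of the circle through $\tilde q$, which lies in the closure of the set because the slice is a pair of arcs centred at the poles containing $\tilde q$ (and distance to a set equals distance to its closure). In the overshoot case $0\le \pi/2-\phi'<\psi-\phi\le\pi/2$, hence $|p'-\rho e_2|^2=r^2+\rho^2-2r\rho\cos(\pi/2-\phi')\le r^2+\rho^2-2r\rho\cos(\psi-\phi)=|p-\tilde q|^2$, so the required inequality survives and the monotonicity follows. This is in effect how the paper avoids the issue: instead of a single rotated point it compares $\sigma(t_2)$ with the whole arc $\Gamma_\rho\subset G$ of $\pa D_\rho$ joining the nearest boundary point $z_1$ to the pole $\rho e_2$, so the ``stopped at the axis'' competitor is automatically available; apart from this, the paper works directly with the nearest boundary point rather than with arbitrary points of the set, which is the only other difference from your argument.
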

\begin{proof}
It is enough to prove that for every $r >0$ it holds  
\beq \label{bonne1}
\int_{\pa D_r \cap E^*}  \bar{d}_{G} \, d \Ha^1 \leq \int_{\pa D_r \cap E}  \bar{d}_{G} \, d \Ha^1.
\eeq
Let us fix $r>0$ and without loss of generality we may assume that $r=1$. Let $\sigma :[-\pi,\pi] \to \R^2$, 
\[
\sigma(t) = \begin{bmatrix}
		\cos(t) \\
		\sin(t)
	\end{bmatrix}.
\]
Since $G$ is symmetric with respect to both coordinate axes, the function  $t \mapsto \bar{d}_{G}(\sigma(t)) $ is  even and for every $ t \in (0,\pi/2)$  it holds $\bar{d}_{G}(\sigma(\pi - t)) = \bar{d}_{G}(\sigma(t))$. We observe that   \eqref{bonne1} follows once we show that 
$$
t \mapsto \bar{d}_{G}(\sigma(t)) \qquad \text{is decreasing on }\, t \in (0,\pi/2). 
$$

To this aim we fix $0 < t_1 < t_2 < \pi/2$. Let us assume that $\sigma(t_1) \in \R^2 \setminus G$,  the case $\sigma(t_1) \in G$ being similar. If $\sigma(t_2) \in  G$ then trivially $\bar{d}_{G}(\sigma(t_2)) \leq 0 \leq \bar{d}_{G}(\sigma(t_1))$.  Let us thus assume that $\sigma(t_2) \in \R^2 \setminus G$. Let $z_1 \in \pa G$ be such that $\bar{d}_{G}(\sigma(t_1)) = |\sigma(t_1) -z_1|$ and let $\rho >0 $ and $\theta_1 \in (0, \pi/2)$ be such that $z_1 = \rho \sigma(\theta_1)$. We denote by $\Gamma_\rho \subset \pa D_\rho$  the arc with endpoints $\rho e_2$ and $z_1$, i.e., 
\[
\Gamma_\rho = \{ \rho \sigma(t) : t \in (\theta_1, \pi/2)\}.
\]
Since $G$ is invariant under Bonnesen symmetrization we have $\Gamma_\rho \subset  G$. But now since $t_1 < t_2 < \pi/2$ it clearly holds (see Figure \ref{bonne2})
\[
\text{dist}(\sigma(t_2), \Gamma_\rho) \leq   \text{dist}(\sigma(t_1), \Gamma_\rho) = |\sigma(t_1) - z_1| = \bar{d}_{G}(\sigma(t_1)) .
\]
Since $\Gamma_\rho \subset G$ we have 
\[
\bar{d}_{G}(\sigma(t_2)) \leq \text{dist}(\sigma(t_2), \Gamma_\rho)
\]
and the claim follows. 

%

\end{proof}

\bigskip

\begin{proof}[\textbf{Proof of  Theorem \ref{thm3}}]
Let $G$ be the ellipse  
\[
G = \{ (x_1,x_2) \in \R^2 : \,  a^2 x_1^2 + x_2^2 < 1 \} \qquad \text{with } \, a>1
\]
as in the assumption and let $(G_t)_t$ be the classical solution of  the volume preserving mean curvature flow
\beq \label{thm3 1}
V_t = - k_{G_t} + \bar{k}_{G_t}
\eeq
starting from $G$.  By \cite{Hu},  $(G_t)_t$ is well defined for all times,  remains smooth and uniformly convex and converges exponentially fast to the disk 
$D_\rho$, where $\rho = \frac{1}{\sqrt{a}}$. Moreover $G_t\not= D_\rho$ for all $t>0$. Let us define $f(t) := \bar{k}_{G_t}$ which is therefore a smooth function  and converges exponentially fast to $1/\rho$. Note that then  $f$ satisfies \eqref{f2} for $c_0 = 1/\rho$.

By the regularity of $E_0$ the flat flow $(E_t)_t$ with the forcing term $f$ starting from $E_0$ coincides with the unique classical solution provided by \cite{Hu}, see \cite[Proposition 4.9]{CN}. Therefore, by the symmetry of $E_0$ we may conclude that
\beq \label{thm3 2}
E_t = (G_t - \rho e_1) \cup (G_t + \rho e_1) 
\eeq
as long as the components $(G_t - \rho e_1)$ and $(G_t + \rho e_1)$ do not intersect each other. By the convexity of $G_t$,  the components $G_t - \rho e_1$ and  $G_t + \rho e_1$ do not intersect each other if the first one stays in the half-space $\{ x_1 < 0\}$ and the latter in $\{ x_1 >0 \}$. This is the same as  to say that the flow $G_t$ does not exit the strip $\{ -\rho < x_1 < \rho\}$. Let us show this.

Assume that for $h>0$ the family of sets $(G_t^h)_t$ is an approximate  flow obtained via \eqref{min mov} with the forcing term $f$ and starting from $G$.   We now show that  each $G^h_t$ is symmetric with respect to the coordinate axes and convex. Recall that the set $G^{h,1}$ is chosen as  a minimizer of the functional
$$
\mathcal{F}(E; G) =   P(E) + \frac{1}{h} \int_E \bar{d}_{G} \, dx  - \bar f(h) |E|.
$$
It is well known that in any dimension the function in \eqref{thm3 2} admits a minimal and a maximal minimizer which are convex and, by uniqueness, symmetric with respect to both coordinate axes, see \cite[Theorem 2]{BCCN}. However, in our two dimensional setting we can provide a simple self contained proof of this fact. 

Given $E$, we set  $E_+ = \{x \in E : x_1>0\}$ and $E_- = \{x \in E: x_1 <0\}$. By reflecting $E_+$ and $E_-$ with respect to the $x_2$-axis we obtain  sets $E_1$ and $E_2$, which are symmetric with respect to the $x_2$-axis and satisfy 
\[
\mathcal{F}(E_1; G)+ \mathcal{F}(E_2; G) \leq 2 \mathcal{F}(E; G).
\]
Then there exists $i=1,2$ such that $\mathcal{F}(E_i; G)\leq\mathcal{F}(E; G)$.
 By repeating the same argument with respect to the  $x_1$-direction we  conclude that we may choose $G^{h,1}$ symmetric with respect to both axes.

Let us show that $G^{h,1}$ is convex. By the Euler-Lagrange equation \eqref{euler} it holds
\[
\frac{\bar{d}_{G}}{h} = -k_{G^{h,1}} + \bar  f(h) \qquad \text{on } \, \pa G^{h,1}
\]
We claim that $\frac{\bar{d}_{G}}{h} (x) \leq\bar f(h)$ for all $x \in \pa G^{h,1}$. Indeed, suppose $x_0 \in \pa  G^{h,1}$ is the maximum of $\bar{d}_{G}$ on $\pa  G^{h,1}$.  If $\bar{d}_{G}(x_0) \leq 0$, then trivially $\frac{\bar{d}_{G}}{h} (x_0) \leq \bar f(h)$ as $f \geq 0$. If $\bar{d}_{G}(x_0) > 0$ then $x_0\not\in G$ and since it is  the furthest point from $G$ and $G$ is convex, it is easy to check that $k_{G^{h,1}}(x_0) \geq 0$. Then by the Euler-Lagrange equation 
\[
\frac{\bar{d}_{G}(x_0)}{h}  = -k_{G^{h,1}}(x_0) + \bar  f(h) \leq \bar  f(h) .
\] 
Therefore  $\bar{d}_{G}/h \leq\bar f(h)$ on  $ \pa G^{h,1}$ and by the  Euler-Lagrange equation
\[
k_{G^{h,1}} = - \frac{\bar{d}_{G}}{h} + \bar  f(h) \geq 0  \qquad \text{on } \, \pa G^{h,1}.
\]
Hence, $G^{h,1}$ is convex.

We now  apply to   $G^{h,1}$ the   Bonnesen circular symmetrization  with respect to the $x_2$-axis which, we recall,  decreases the perimeter, preserves  the area and decreases the dissipation term $\int_{G^{h,1}} \bar{d}_{G} \, dx $, by Lemma~\ref{bonnesen}.  Therefore we may assume that $G^{h,1}$ is   invariant under the  Bonnesen annular symmetrization with respect to the $x_2$-axis.  By iterating the  argument  we deduce that the same holds for $G_t^h$ for all $t>0$. Letting $h \to 0$ we deduce that the same holds for the flat flow, and  by the uniqueness  for the classical solution $(G_t)_t$. Therefore for every $t>0$ and $r>0$ the intersection  $G_t \cap \pa D_r$ is a union of two circular arcs with equal length which are both symmetric with respect to the $x_2$-axis. 

Now  if  $G_t$ exits the strip $\{ -\rho < x_1 < \rho\}$, say at time $t_0$, then the intersection $G_{t_0} \cap \pa D_\rho$ contains the points $(-\rho,0)$ and $(\rho,0)$. Since  $G_t \cap \pa D_\rho$ is a union of two circular arcs, which both are symmetric with respect to the $x_2$-axis,  we have  
\[
G_{t_0} \cap \pa D_\rho = \pa D_\rho.
\]
By the convexity of $G_{t_0}$ this implies  $D_\rho \subset G_{t_0}$. But since the flow \eqref{thm3 1} preserves the area  we have $|G_{t_0} |  = |D_\rho|$. Then  it holds   $G_{t_0} = D_\rho$, which is impossible. Therefore the flow  $G_t$ does not exit the strip $\{ -\rho < x_1 < \rho\}$,  \eqref{thm3 2} holds for all times and  the conclusion of the theorem follows. 

\end{proof}

\section*{Acknlowdgments}
The research of N.F. has been funded by PRIN Project 2015PA5MP7. The research of V.J. was supported by the Academy of Finland grant 314227. N.F. and M.M. are members of Gruppo Nazionale per l'Analisi Matematica, la Probabilit\`a e le loro Applicazioni (GNAMPA) of INdAM.

\end{document}